\documentclass[11pt]{amsart}
\usepackage{amsmath,amssymb,,latexsym,esint,cite}
\usepackage{verbatim}
\usepackage[left=3.1cm,right=3.1cm,top=3.2cm,bottom=3.2cm]{geometry}

\usepackage{color,enumitem,graphicx}
\usepackage[colorlinks=true,urlcolor=blue, citecolor=red,linkcolor=blue,linktocpage,pdfpagelabels, bookmarksnumbered,bookmarksopen]{hyperref}

\usepackage[hyperpageref]{backref}
\usepackage[english]{babel}

\newcommand{\abs}[1]{\left|#1\right|}
\newcommand{\bdry}[1]{\partial #1}
\newcommand{\bgset}[1]{\big\{#1\big\}}
\newcommand{\A}{{\mathcal A}}

\newcommand{\F}{{\mathcal F}}

\newcommand{\dist}[2]{\text{dist}\, (#1,#2)}
\newcommand{\eps}{\varepsilon}
\newcommand{\half}{\frac{1}{2}}
\newcommand{\id}[1][]{id_{\, #1}}
\newcommand{\incl}{\subset}
\newcommand{\M}{{\mathcal M}}
\newcommand{\N}{\mathbb N}
\newcommand{\norm}[2][]{\left\|#2\right\|_{#1}}
\renewcommand{\O}{\text{O}}
\newcommand{\PS}[1]{$(\text{PS})_{#1}$}
\newcommand{\pnorm}[2][]{\if #1'' \left|#2\right|_p \else \left|#2\right|_{#1} \fi}
\newcommand{\R}{\mathbb R}
\newcommand{\RP}{\R \text{P}}
\newcommand{\restr}[2]{\left.#1\right|_{#2}}
\newcommand{\seq}[1]{\left(#1\right)}
\newcommand{\set}[1]{\left\{#1\right\}}
\newcommand{\strictsubset}{\subset \subset}
\newcommand{\Z}{\mathbb Z}

\newcommand{\ds}[1]{\displaystyle #1}
\DeclareMathOperator{\supp}{supp}

\newenvironment{enumroman}{\begin{enumerate}

}{\end{enumerate}}

\newenvironment{properties}[1]{\begin{enumerate}
		
		}{\end{enumerate}}

\newtheorem{lemma}{Lemma}[section]
\newtheorem{proposition}[lemma]{Proposition}
\newtheorem{theorem}[lemma]{Theorem}
\newtheorem{corollary}[lemma]{Corollary}
\theoremstyle{definition}

\theoremstyle{remark}
\newtheorem{remark}[lemma]{Remark}

\numberwithin{equation}{section}

\title[Fractional $p$-Laplacian problems involving critical Hardy-Sobolev exponents]{The Brezis-Nirenberg problem for the fractional $p$-Laplacian involving critical Hardy-Sobolev exponents }


\author[Y.\ Yang]{Yang Yang}


\address[Y. Yang]{School of Science
	\newline\indent
	Jiangnan University
	\newline\indent
	Wuxi, Jiangsu 214122, China}
\email{yynjnu@126.com}

\subjclass[2010]{Primary 35R11, 35J92, 35B33, Secondary 35A15}
\keywords{Fractional $p$-Laplacian, Brezis-Nirenberg problem, fractional critical Hardy-Sobolev exponents, nontrivial solutions, variational methods, cohomological index, pseudo-index}
\thanks{The author thanks Prof. Kanishka Perera and Prof. Marco Squassina for their help and many valuable discussions. Project supported by NSFC(No. 11501252, No. 11571176).}

\begin{document}

\begin{abstract}
We obtain existence, multiplicity, and bifurcation results for the Brezis-Nirenberg problem for the fractional $p$\nobreakdash-Laplacian operator, involving critical Hardy-Sobolev exponents.  Our results are mainly extend results in the literature for $\alpha=0$. In the absence of an explicit formula for a minimizer in the fractional Hardy-Sobolev inequality  $\alpha\not= 0$, we get around this difficulty by working with certain asymptotic estimates for minimizers recently obtained in \cite{MM}.
\end{abstract}

\maketitle

\begin{center}
	\begin{minipage}{12cm}
		\small
		\tableofcontents
	\end{minipage}
\end{center}

\medskip

\section{Introduction and main results}

For $1 < p < \infty$, $s \in (0,1)$, and $N > sp$, the fractional $p$-Laplacian $(- \Delta)_p^s$ is the nonlinear nonlocal operator defined on smooth functions by
\[
(- \Delta)_p^s\, u(x) = 2\, \lim_{\eps \searrow 0} \int_{B_\eps(x)^c} \frac{|u(x) - u(y)|^{p-2}\, (u(x) - u(y))}{|x - y|^{N+sp}}\, dy, \quad x \in \R^N.
\]
This definition is consistent, up to a normalization constant depending on $N$ and $s$, with the usual definition of the linear fractional Laplacian operator $(- \Delta)^s$ when $p = 2$. There is, currently, a rapidly growing literature on problems involving these nonlocal operators. In particular, fractional $p$-eigenvalue problems have been studied in Brasco et al.\! \cite{BrPaSq}, Brasco and Parini \cite{BrPa},
Franzina and Palatucci \cite{FrPa}, Iannizzotto and Squassina \cite{MR3245079}, and Lindgren and Lindqvist \cite{MR3148135}.
Regularity of solutions was obtained in Brasco and Lindgren \cite{BraLin}, Di Castro et al.\! \cite{DiKuPa,MR3237774}, Iannizzotto et al.\! \cite{IaMoSq}, Kuusi et al.\! \cite{MR3339179}, and Lindgren \cite{Li}. Existence via Morse theory was investigated in Iannizzotto et al.\! \cite{IaLiPeSq}. Critical case was considered in Mosconi and Squassina\cite{MS1,MS2},  Mosconi et al.\! \cite{MPSY} and Xiang et al.\!\cite{XZZ}.
This operator appears in some recent works, see \cite{AMRT,IN} as well as \cite{Ca} for the motivations, that led to its introduction.

Let $\Omega$ be a bounded domain in $\R^N$ with Lipschitz boundary. We consider the problem
\begin{equation} \label{1}
\begin{cases}
(- \Delta)_p^s\, u  = \lambda\, |u|^{p-2}\, u +\frac{|u|^{{{p_s^\ast(\alpha)} - 2}}}{|x|^\alpha}\, u & \text{in $\Omega$}  \\
u  = 0  & \text{in $\R^N \setminus \Omega$},
\end{cases}
\end{equation}
where $\lambda > 0$, $0<\alpha<sp<N$ and ${p_s^\ast(\alpha)} = p(N-\alpha)/(N - sp)$ is the fractional critical Hardy-Sobolev exponent. Let us recall the weak formulation of problem \eqref{1}. Let

\[
[u]_{s,p} = \left(\int_{\R^{2N}} \frac{|u(x) - u(y)|^p}{|x - y|^{N+sp}}\, dx dy\right)^{1/p}
\]
be the Gagliardo seminorm of a measurable function $u : \R^N \to \R$, and let
\[
W^{s,p}(\R^N) = \set{u \in L^p(\R^N) : [u]_{s,p} < \infty}
\]
be the fractional Sobolev space endowed with the norm
\[
\norm[s,p]{u} = \big(\pnorm{u}^p + [u]_{s,p}^p\big)^{1/p},
\]
where $\pnorm{\cdot}$ is the norm in $L^p(\R^N)$. We work in the closed linear subspace
\[
W^{s,p}_0(\Omega) = \set{u \in W^{s,p}(\R^N) : u = 0 \text{ a.e.\! in } \R^N \setminus \Omega},
\]
equivalently renormed by setting $\norm{\cdot} = [\cdot]_{s,p}$, which is a uniformly convex Banach space.
The imbedding $W^{s,p}_0(\Omega) \hookrightarrow L^r(\Omega)$ is continuous for $r \in [1,{p_s^\ast}]$ and compact for $r \in [1,{p_s^\ast})$. Let

\[\pnorm[{p_s^\ast(\alpha)}]{u}=\left(\int_{\R^N}\frac{|u|^{{{p_s^\ast(\alpha)}}}}{|x|^\alpha}dx\right)^{1/{{p_s^\ast(\alpha)}}}.\]
 A function $u \in W^{s,p}_0(\Omega)$ is a weak solution of problem \eqref{1} if

\begin{align*}
\int_{\R^{2N}} \frac{|u(x) - u(y)|^{p-2}\, (u(x) - u(y))\, (v(x) - v(y))}{|x - y|^{N+sp}}\, dx dy &= \lambda \int_\Omega |u|^{p-2}\, uv\, dx \\
&+ \int_\Omega \frac{|u|^{{{p_s^\ast(\alpha)} - 2}}}{|x|^\alpha}\, uv\, dx, \quad \forall v \in W^{s,p}_0(\Omega).
\end{align*}

If $\alpha=0$, problem \eqref{1} reduces to the critical fractional p-Laplacian problem
\begin{equation} \label{1.2}
\begin{cases}
(- \Delta)_p^s\, u = \lambda |u|^{p-2} u + |u|^{p_s^\ast - 2}\, u & \text{in $\Omega$}  \\
u= 0 & \text{in $\R^N \setminus \Omega$},
\end{cases}
\end{equation}
where $\lambda > 0$ and $p_s^\ast = Np/(N - sp)$. This nonlocal problem generalizes the well-known Brezis-Nirenberg problem, which has been extensively studied beginning with the seminal paper \cite{MR709644} (see, e.g., \cite{MR779872,MR831041,MR829403,MR1009077,MR987374,MR1124117,MR1083144,MR1154480,MR1306583,MR1473856,MR1441856,MR1491613,MR1695021,MR1784441,MR1961520, MR3089742,MR3237009,MR3060890,MR3271254} and references therein). Consequently, many results known in the local case $s = 1$ have been extended to problem \eqref{1.2}. In particular, S. Mosconi, K. Perera, M. Squassina,
and Y. Yang \cite{MPSY} have shown that problem \eqref{1.2} has a nontrivial weak solution in the following cases:
\begin{enumroman}
\item $N = sp^2$ and $\lambda < \lambda_1$;
\item $N > sp^2$ and $\lambda$ is not one of the eigenvalues $\lambda_k$;
\item $N^2/(N + s) > sp^2$;
\item $(N^3 + s^3 p^3)/N\, (N + s) > sp^2$ and $\bdry{\Omega} \in C^{1,1}$.
\end{enumroman}

This extends to the fractional setting some well-known results of Brezis and Nirenberg \cite{MR709644}, Capozzi et al.\! \cite{MR831041}, Zhang \cite{MR987374}, and Gazzola and Ruf \cite{MR1441856} for critical Laplacian problems.

In the present paper we consider the case $\alpha \ne 0$ of problem \eqref{1}.  This presents us with two serious new difficulties. Let
\begin{equation} \label{3}
S= \inf_{u \in {W}^{s,p}(\R^N) \setminus \set{0}}\, \frac{\norm{u}^p}{\pnorm[{p_s^\ast(\alpha)}]{u}^p},
\end{equation}
which is positive by the fractional Hardy-Sobolev inequality. Our  major difficulty is the lack of an explicit formula for a minimizer for $S$.  We will get around this difficulty by working with certain asymptotic estimates for minimizers recently obtained in Marano et al.\! \cite{MM}.

Our second main difficulty is   that the linking arguments based on eigenspaces of $(- \Delta)^s$ used in the case $p = 2$ do {\em not} work when $p \ne 2$ since the nonlinear operator $(- \Delta)_p^s$ does not have linear eigenspaces. We will use a more general construction based on sublevel sets as in Perera and Szulkin \cite{MR2153141} (see also Perera et al.\! \cite[Proposition 3.23]{MR2640827}). Moreover, the standard sequence of variational eigenvalues of $(- \Delta)_p^s$ based on the genus does not give enough information about the structure of the sublevel sets to carry out this linking construction. Therefore we will use a different sequence of eigenvalues introduced in Iannizzotto et al.\! \cite{IaLiPeSq} that is based on the $\Z_2$-cohomological index of Fadell and Rabinowitz \cite{MR57:17677}.

The Dirichlet spectrum of $(- \Delta)_p^s$ in $\Omega$ consists of those $\lambda \in \R$ for which the problem
\begin{equation} \label{1.4}
\begin{cases}
(- \Delta)_p^s\, u  = \lambda\, |u|^{p-2}\, u & \text{in $\Omega$}  \\
u  = 0 & \text{in $\R^N \setminus \Omega$}
\end{cases}
\end{equation}
has a nontrivial weak solution. Although a complete description of the spectrum is not known when $p \ne 2$, we can define an increasing and unbounded sequence of variational eigenvalues via a suitable minimax scheme. The standard scheme based on the genus does not give the index information necessary for our purposes here, so we will use the following scheme based on the cohomological index as in Iannizzotto et al.\! \cite{IaLiPeSq} (see also Perera \cite{MR1998432}). Let
\[
\Psi(u) = \frac{1}{\pnorm{u}^p}, \quad u \in \M = \bgset{u \in W^{s,p}_0(\Omega) : \norm{u} = 1}.
\]
Then eigenvalues of problem \eqref{1.4} coincide with critical values of $\Psi$. We use the standard notation
\[
\Psi^a = \set{u \in \M : \Psi(u) \le a}, \quad \Psi_a = \set{u \in \M : \Psi(u) \ge a}, \quad a \in \R
\]
for the sublevel sets and superlevel sets, respectively. Let $\F$ denote the class of symmetric subsets of $\M$, and set
\[
\lambda_k := \inf_{M \in \F,\; i(M) \ge k}\, \sup_{u \in M}\, \Psi(u), \quad k \in \N.
\]
Then $0 < \lambda_1 < \lambda_2 \le \lambda_3 \le \cdots \to + \infty$ is a sequence of eigenvalues of problem \eqref{1.4}, and
\begin{equation} \label{5}
\lambda_k < \lambda_{k+1} \implies i(\Psi^{\lambda_k}) = i(\M \setminus \Psi_{\lambda_{k+1}}) = k
\end{equation}
(see Iannizzotto et al.\! \cite[Proposition 2.4]{IaLiPeSq}). The asymptotic behavior of these eigenvalues was recently studied in Iannizzotto and Squassina \cite{MR3245079}. Making essential use of the index information in \eqref{5}, we will prove the following theorem.

\begin{theorem}\label{Theorem 1}
Let $1 < p < \infty$, $s \in (0,1)$, $0<\alpha<sp<N$, and $\lambda > 0$. Then problem \eqref{1} has a nontrivial weak solution in the following cases:
\begin{enumroman}
\item $N = sp^2$ and $\lambda < \lambda_1$;
\item $N > sp^2$ and $\lambda$ is not one of the eigenvalues $\lambda_k$;
\item $[(N-\alpha)N+\alpha\,s\,p(1+p)]/(N + s) > sp^2$,
\item $ [N^2(N-\alpha) + s^3 p^3 +\alpha\,s\,p\,(N-sp)]/N\, (N -\alpha + s) > sp^2,$ and $\bdry{\Omega} \in C^{1,1}$.
\end{enumroman}
\end{theorem}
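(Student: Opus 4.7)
The plan is to find critical points of the energy
\begin{equation*}
E(u) = \frac{1}{p}\,\norm{u}^p - \frac{\lambda}{p}\,\pnorm{u}^p - \frac{1}{p_s^\ast(\alpha)}\int_\Omega \frac{|u|^{p_s^\ast(\alpha)}}{|x|^\alpha}\,dx
\end{equation*}
on $W^{s,p}_0(\Omega)$, since these are exactly the weak solutions of \eqref{1}. A first step is to adapt the concentration-compactness argument used for $\alpha=0$ in \cite{MPSY} to the weight $|x|^{-\alpha}$, obtaining a local Palais--Smale (or Cerami) condition for $E$ at every level strictly below
\begin{equation*}
c^\ast := \frac{sp-\alpha}{p\,(N-\alpha)}\, S^{(N-\alpha)/(sp-\alpha)}.
\end{equation*}
The problem then reduces, in each case, to exhibiting a minimax value strictly below $c^\ast$.

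In case (i), $\lambda < \lambda_1$ makes $0$ a strict local minimum of $E$, and the mountain pass level taken over paths joining $0$ to a fixed point of negative energy is a candidate critical value. In case (ii), pick the unique $k\ge 1$ with $\lambda_k<\lambda<\lambda_{k+1}$ (if $\lambda<\lambda_1$, revert to the mountain pass). By \eqref{5}, both $\Psi^{\lambda_k}$ and $\M\setminus\Psi_{\lambda_{k+1}}$ have $\Z_2$-cohomological index equal to $k$, while the quadratic part $\norm{u}^p - \lambda\,\pnorm{u}^p$ of $E$ is nonpositive on the cone generated by $\Psi^{\lambda_k}$ and coercive on the cone generated by $\M\setminus\Psi_{\lambda_{k+1}}$. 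Following Perera--Szulkin \cite{MR2153141} (see also \cite[Proposition 3.23]{MR2640827}) these two sets link in the pseudo-index sense, producing a candidate critical value that is realized as soon as it falls below $c^\ast$. Cases (iii) and (iv) plug into one of these two schemes.

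To push the minimax level below $c^\ast$ one uses a rescaled, truncated minimizer for $S$. Taking $U$ as provided by \cite{MM}, setting $U_\eps(x) = \eps^{-(N-sp)/p}\,U(x/\eps)$ about a concentration point (chosen at $0\in\Omega$ because of the weight), and multiplying by a cutoff $\eta\in C^\infty_c(\Omega)$ equal to $1$ near $0$, the asymptotic estimates of \cite{MM} should yield expansions of the shape
\begin{equation*}
\norm{\eta\,U_\eps}^p = S^{(N-\alpha)/(sp-\alpha)} + O\bigl(\eps^{(N-sp)/(p-1)}\bigr), \qquad \int_\Omega \frac{|\eta\,U_\eps|^{p_s^\ast(\alpha)}}{|x|^\alpha}\,dx = S^{(N-\alpha)/(sp-\alpha)} + o(1),
\end{equation*}
together with a sharp lower bound on $\pnorm{\eta\,U_\eps}^p$ whose leading power of $\eps$ depends on the relative position of $N$, $sp^2$ and $\alpha$. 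Optimizing $\sup_{t\geq 0} E(t\,\eta\,U_\eps)$ (and, in case (ii), testing on a suitable perturbation of an element of $\Psi^{\lambda_k}$ by $\eta\,U_\eps$) and balancing the gain $-\lambda\pnorm{\eta\,U_\eps}^p$ against the Gagliardo correction produces the strict inequality $\sup < c^\ast$ precisely in the dimensional regimes listed; case (iv) additionally uses $\partial\Omega\in C^{1,1}$ through Hopf-type decay estimates for the eigenfunctions entering the test set.

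The principal difficulty is this last estimate. Since $\alpha\neq 0$ rules out an explicit formula for the extremal of $S$, each expansion term must be extracted from the qualitative asymptotic bounds of \cite{MM}; meanwhile the weight $|x|^{-\alpha}$ both alters the natural scaling of the weighted critical integral and introduces new correction terms in the Gagliardo seminorm expansion. The conditions in (i)-(iv) are precisely the regimes where the three-way competition between the Gagliardo correction, the $L^p$ term and the weighted critical integral still allows $\sup_t E(t\,\eta\,U_\eps) < c^\ast$ for some small $\eps$, making the test-function computation the decisive and most delicate part of the argument.
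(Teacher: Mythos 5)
Your overall framework is the right one---the local Palais--Smale condition below the threshold $c^\ast = \tfrac{sp-\alpha}{p(N-\alpha)}\,S^{(N-\alpha)/(sp-\alpha)}$, a mountain pass for $\lambda<\lambda_1$, a cohomological linking scheme for larger $\lambda$, and test function estimates around a rescaled minimizer to push the minimax level strictly below $c^\ast$. This matches the paper's structure. But two steps in your outline deviate in ways that would cause trouble.

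First, the truncation. You propose taking $\eta\,U_\eps$ with $\eta\in C^\infty_c(\Omega)$, which is the standard cutoff for the local $p$-Laplacian. In the nonlocal setting the Gagliardo seminorm of a product is not controlled by the individual factors in any simple way: the cross terms where $\eta(x)\neq\eta(y)$ contribute at orders that are hard to estimate sharply without an explicit $U$. The paper instead uses a \emph{nonlinear} truncation $u_{\eps,\delta}=G_{\eps,\delta}(U_\eps)$, where $G_{\eps,\delta}(t)=\int_0^t g_{\eps,\delta}'(\tau)^{1/p}\,d\tau$ for a piecewise-linear $g_{\eps,\delta}$; its key advantage is that $\|G_{\eps,\delta}(U_\eps)\|^p$ is bounded above by the result of testing $(-\Delta)_p^s\,U_\eps = U_\eps^{p_s^\ast(\alpha)-1}$ against $g_{\eps,\delta}(U_\eps)$ (via Brasco--Parini's Lemma A.2), and the error picked up is precisely $U_\eps(\delta)\,m_{\eps,\delta}^{p-1}$, which the decay estimate \eqref{10} controls. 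Your $o(1)$ for the weighted critical integral is also too weak---the paper needs the quantified lower bound $S^{(N-\alpha)/(sp-\alpha)}-C(\eps/\delta)^{(N-\alpha)/(p-1)}$.

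Second, and more seriously, you write that cases (iii) and (iv) ``plug into one of these two schemes.'' In fact cases (i)--(ii) already exhaust $N\geq sp^2$ with $\lambda$ off the spectrum, so the content of (iii) and (iv) is precisely the \emph{resonant} case $\lambda=\lambda_k$, which neither the mountain pass nor the naive linking argument covers, because the quadratic part $\|u\|^p-\lambda\|u\|_p^p$ is no longer strictly negative on the cone over $\Psi^{\lambda_k}$. The paper's argument for these cases is substantially harder: one must couple the concentration scale and the truncation scale by taking $\delta=\eps^\mu$ with $\mu$ just below the critical exponent $\mu_0=(N-sp^2)/(N-sp)$, use the index-preserving truncated eigenfunction set $E_\delta$ supported \emph{away} from the origin (so that its support is disjoint from $u_{\eps,\delta}$), and balance the Young-inequality error terms $\widetilde J_q$ against the gain $-\lambda\|u_{\eps,\delta}\|_p^p$. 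The dimensional constraints in (iii) and (iv) are exactly what make the exponents $\beta_q(\mu)$ positive so that these errors are dominated. Case (iv) further upgrades the eigenfunction estimate \eqref{31} from $\lambda_k+C\delta^{N-sp}$ to $\lambda_k+C\delta^N$ using boundary regularity. None of this is visible in your sketch, and without it the resonant case does not close.

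Your instinct that the test function competition is the decisive and delicate point is right, but the decisive device---the choice of nonlinear truncation and the $\delta=\eps^\mu$ scaling at resonance---is missing from your proposal.
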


We also prove the following bifurcation and multiplicity results for problem \eqref{1} that do not require $N\geq sp^2$.

Set \[V_\alpha(\Omega)=\int_\Omega|x|^{\frac{\alpha(N-sp)}{sp-\alpha}}dx,\]

and note that
\begin{equation}\label{V}
\int_\Omega|u|^pdx\leq V_\alpha(\Omega)^{\frac{sp-\alpha}{N-\alpha}}
\left(\int_\Omega\frac{|u|^{p^\ast_s(\alpha)}}{|x|^\alpha}dx\right)^{p/p^\ast_s(\alpha)}, \forall u\in W_0^{s,p}(\Omega),
\end{equation}
by the H\"{o}lder inequality.

\begin{theorem} \label{Theorem 2}
\noindent
\begin{enumroman}
\item \label{Theorem 1.i} If
\[
\lambda_1 - \frac{S}{V_\alpha(\Omega)^{(sp-\alpha)/(N-\alpha)}}<\lambda < \lambda_1,
\]
then problem \eqref{1} has a pair of nontrivial solutions $\pm\, u^\lambda$ such that $u^\lambda \to 0$ as $\lambda \nearrow \lambda_1$.

\item \label{Theorem 1.ii} If $\lambda_k \le \lambda < \lambda_{k+1} = \cdots = \lambda_{k+m} < \lambda_{k+m+1}$ for some $k, m \in \N$ and
\begin{equation} \label{555}
\lambda > \lambda_{k+1} - \frac{S}{V_\alpha(\Omega)^{(sp-\alpha)/(N-\alpha)}},
\end{equation}
then problem \eqref{1} has $m$ distinct pairs of nontrivial solutions $\pm\, u^\lambda_j,\, j = 1,\dots,m$ such that $u^\lambda_j \to 0$ as $\lambda \nearrow \lambda_{k+1}$.
\end{enumroman}
\end{theorem}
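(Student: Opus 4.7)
My plan is to set up the variational framework, establish a localized Palais-Smale condition below the critical threshold, and apply the $\Z_2$-cohomological pseudo-index theory of Perera \cite{MR1998432}, Perera-Szulkin \cite{MR2153141} (see also \cite[Chapter~3]{MR2640827}) to produce the required critical values of the energy functional
\[
I_\lambda(u) = \frac{1}{p}\,\norm{u}^p - \frac{\lambda}{p}\,\pnorm{u}^p - \frac{1}{p_s^\ast(\alpha)}\int_\Omega \frac{|u|^{p_s^\ast(\alpha)}}{|x|^\alpha}\,dx,
\]
whose critical points are precisely the weak solutions of \eqref{1}. Adapting the concentration-compactness argument of \cite{MPSY}, but using the Hardy-Sobolev minimizer asymptotics of \cite{MM} in place of an explicit Sobolev minimizer, $I_\lambda$ satisfies $(\mathrm{PS})_c$ at every level $c < c^\ast := \frac{sp-\alpha}{p(N-\alpha)}\,S^{(N-\alpha)/(sp-\alpha)}$.

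For part~\ref{Theorem 1.ii}, I would exploit the index data from \eqref{5}, namely $i(\Psi^{\lambda_{k+m}}) = k+m = i(\M\setminus\Psi_{\lambda_{k+m+1}})$, together with the topological linking of these two sets. The cohomological pseudo-index then yields a nondecreasing family of minimax values $0 < c_1 \le c_2 \le \cdots \le c_m$ for $I_\lambda$, where the positivity $c_1 > 0$ comes from the index-linking with $\M\setminus\Psi_{\lambda_{k+m+1}}$ combined with $\lambda < \lambda_{k+m+1}$ (a positive barrier on a small sphere), and the upper bound on $c_m$ is controlled by $\sup\{I_\lambda(tu):u\in\Psi^{\lambda_{k+m}},\,t\ge 0\}$.

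The crux is the upper bound $c_m < c^\ast$. For $u \in \Psi^{\lambda_{k+m}}$ we have $\norm{u}=1$ and $\pnorm{u}^p \ge 1/\lambda_{k+m}$; optimizing in $t$ gives
\[
\max_{t\ge 0} I_\lambda(tu) = \frac{sp-\alpha}{p(N-\alpha)}\,\frac{\bigl(1-\lambda\,\pnorm{u}^p\bigr)^{(N-\alpha)/(sp-\alpha)}}{\bigl(\int_\Omega |u|^{p_s^\ast(\alpha)}/|x|^\alpha\,dx\bigr)^{(N-sp)/(sp-\alpha)}}
\]
when the numerator is positive (and zero otherwise). Bounding the denominator from below via \eqref{V} by $\bigl(\pnorm{u}^p/V_\alpha(\Omega)^{(sp-\alpha)/(N-\alpha)}\bigr)^{p_s^\ast(\alpha)/p}$ and then substituting $\pnorm{u}^p \ge 1/\lambda_{k+m} = 1/\lambda_{k+1}$ dominates this maximum by $\frac{sp-\alpha}{p(N-\alpha)}(\lambda_{k+1}-\lambda)^{(N-\alpha)/(sp-\alpha)}\,V_\alpha(\Omega)$, which is strictly less than $c^\ast$ exactly under~\eqref{555}.

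Since each $c_j\in (0,c^\ast)$, the Palais-Smale condition applies and the pseudo-index machinery produces $m$ distinct pairs of nontrivial critical points $\pm u_j^\lambda$; when several $c_j$'s coincide, the critical set at that common level carries positive cohomological index, preserving the multiplicity. Part~\ref{Theorem 1.i} is the special case $k=0,\,m=1$. The bifurcation assertion $u_j^\lambda\to 0$ as $\lambda\nearrow\lambda_{k+1}$ then follows since $c_j = I_\lambda(u_j^\lambda)$ is bounded by the quantity above, which vanishes in that limit; combining with $\langle I_\lambda'(u_j^\lambda),u_j^\lambda\rangle=0$ and the Nehari-type identity $I_\lambda(u)-\tfrac{1}{p}\langle I_\lambda'(u),u\rangle=\bigl(\tfrac{1}{p}-\tfrac{1}{p_s^\ast(\alpha)}\bigr)\int_\Omega |u|^{p_s^\ast(\alpha)}/|x|^\alpha\,dx$ forces $\norm{u_j^\lambda}\to 0$. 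The main obstacle I anticipate is weaving the index data through the pseudo-index machinery so that the $m$ critical orbits emerge as genuinely distinct: calibrating the construction around the sublevel set $\Psi^{\lambda_{k+m}}$ (for the upper bound) and its complementary superlevel set $\M\setminus\Psi_{\lambda_{k+m+1}}$ (for the lower bound and multiplicity) via \eqref{5} is the delicate step.
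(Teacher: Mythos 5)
Your overall plan (PS condition below $c^\ast=\tfrac{sp-\alpha}{p(N-\alpha)}S^{(N-\alpha)/(sp-\alpha)}$, cohomological pseudo-index with $\Psi^{\lambda_{k+m}}$ supplying the "large" index set, the H\"older bound \eqref{V} to control $\sup I_\lambda$, and the Nehari-type identity to force $u_j^\lambda\to0$) is essentially the route the paper takes, and your computation of $\max_{t\geq 0}I_\lambda(tu)$ and its domination by $\tfrac{sp-\alpha}{p(N-\alpha)}V_\alpha(\Omega)(\lambda_{k+1}-\lambda)^{(N-\alpha)/(sp-\alpha)}$ is correct and equivalent to the paper's.

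However, there is a genuine gap in the index-linking step, and it is precisely the point you yourself flag as delicate. You propose to pair $\Psi^{\lambda_{k+m}}$ (index $k+m$) against $\M\setminus\Psi_{\lambda_{k+m+1}}$, which by \eqref{5} \emph{also} has index $k+m$. The pseudo-index theorem (here Theorem~\ref{Theorem 2.4}) produces critical values $c_j^\ast$ only for $j$ in the range $i(S_1\setminus B_0) < j \le i(A_0)$; with both indices equal to $k+m$ the range is empty and the theorem yields no critical points. What one must take instead is $B_0 = \Psi_{\lambda_{k+1}}$, so that $i(S_1\setminus B_0)=i(\M\setminus\Psi_{\lambda_{k+1}})=k$ (again by \eqref{5}, using $\lambda_k<\lambda_{k+1}$), giving the index gap of exactly $m$ that produces $m$ distinct pairs. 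The positivity of $\inf I_\lambda(B)$ on $B=\{ru:u\in B_0\}$ then uses $\pnorm{u}^p\le 1/\lambda_{k+1}$ and $\lambda<\lambda_{k+1}$, which is the correct (and stronger) version of the barrier argument you describe via $\lambda<\lambda_{k+m+1}$; your bound is weaker than needed because $\Psi_{\lambda_{k+m+1}}\subset\Psi_{\lambda_{k+1}}$ is smaller. With this single correction — $B_0=\Psi_{\lambda_{k+1}}$ rather than $\Psi_{\lambda_{k+m+1}}$ — the rest of your argument goes through and matches the paper's proof.
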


In particular, we have the following existence result.

\begin{corollary}
Problem \eqref{1} has a nontrivial solution for all $\lambda \in \ds{\bigcup_{k=1}^\infty} \left(\lambda_k - \frac{S}{V_\alpha(\Omega)^{(sp-\alpha)/(N-\alpha)}}\right).$
\end{corollary}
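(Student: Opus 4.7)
The plan is to read this corollary as a direct consequence of Theorem \ref{Theorem 2}, splitting $\lambda$ into the correct eigenvalue slot and checking that the hypothesis of the theorem is inherited from the union structure. I shall write
\[
\delta := \frac{S}{V_\alpha(\Omega)^{(sp-\alpha)/(N-\alpha)}}
\]
for brevity and interpret the union in the statement as $\bigcup_{k\ge 1}(\lambda_k-\delta,\,\lambda_k)$, whose $k$-th piece has upper endpoint the $k$-th variational eigenvalue.

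The argument is a short case distinction. For $k=1$ and $\lambda\in(\lambda_1-\delta,\lambda_1)$, Theorem \ref{Theorem 2}\ref{Theorem 1.i} applies verbatim and produces a pair $\pm u^\lambda$ of nontrivial solutions. For $k\geq 2$ and $\lambda\in(\lambda_k-\delta,\lambda_k)$, I would first dispose of the subcase $\lambda<\lambda_1$: monotonicity of the eigenvalue sequence forces $\lambda_1-\delta\le \lambda_k-\delta<\lambda<\lambda_1$, so $\lambda$ lies in $(\lambda_1-\delta,\lambda_1)$ and clause \ref{Theorem 1.i} again applies. Otherwise $\lambda_1\le\lambda<\lambda_k$, and I locate the unique index $j\in\{1,\ldots,k-1\}$ with $\lambda_j\le\lambda<\lambda_{j+1}$; grouping repeated eigenvalues, I write $\lambda_{j+1}=\cdots=\lambda_{j+m}<\lambda_{j+m+1}$ for some $m\geq 1$. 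Because $\lambda_{j+1}\le\lambda_k$, the strict bound $\lambda>\lambda_k-\delta$ immediately upgrades to
\[
\lambda > \lambda_{j+1} - \delta,
\]
which is precisely hypothesis \eqref{555} at the index $j$. Theorem \ref{Theorem 2}\ref{Theorem 1.ii} therefore yields $m$ distinct pairs of nontrivial solutions, and in particular at least one nontrivial solution exists.

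There is no substantive obstacle beyond the bookkeeping above; the single non-routine point is the propagation of the lower bound from the top index $k$ down to $j+1$, which follows at once from monotonicity of $(\lambda_n)$ and the inequality $\lambda_{j+1}\le\lambda_k$. Consequently the corollary falls out of Theorem \ref{Theorem 2} with no additional analysis, and all the genuine work has been absorbed into the proof of that theorem, which in turn relies on the asymptotic minimizer estimates of \cite{MM} and the cohomological-index linking construction referenced earlier in the introduction.
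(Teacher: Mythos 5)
Your reading of the union as $\bigcup_{k\ge1}(\lambda_k-\delta,\lambda_k)$ with $\delta=S/V_\alpha(\Omega)^{(sp-\alpha)/(N-\alpha)}$ is the intended one, and your reduction to Theorem \ref{Theorem 2} is exactly right: for $\lambda<\lambda_1$ one falls into $(\lambda_1-\delta,\lambda_1)$ by monotonicity and uses part $(i)$, and otherwise one locates the slot $\lambda_j\le\lambda<\lambda_{j+1}$ with $j\le k-1$, so that $\lambda>\lambda_k-\delta\ge\lambda_{j+1}-\delta$ gives hypothesis \eqref{555} for part $(ii)$. The paper states the corollary without proof as an immediate consequence of Theorem \ref{Theorem 2}, and your argument supplies precisely the bookkeeping that derivation requires, so the approaches coincide.
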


We note that $\lambda_1 \ge \frac{S}{V_\alpha(\Omega)^{(sp-\alpha)/(N-\alpha)}}$. Indeed, if $\varphi_1$ is an eigenfunction associated with $\lambda_1$,
\[
\lambda_1 = \frac{\norm{\varphi_1}^p}{\pnorm[p]{\varphi_1}^p} \ge \frac{S \pnorm[p^\ast_s(\alpha)]{\varphi_1}^{p}}{{\pnorm[p]{\varphi_1}^p}} \ge \frac{S}{V_\alpha(\Omega)^{(sp-\alpha)/(N-\alpha)}}
\]
by the H\"{o}lder inequality.

\noindent
These theorems extends to the fractional setting some well-known results of Garc{\'{\i}}a Azorero and Peral Alonso \cite{MR912211}, Egnell \cite{MR956567}, Guedda and V{\'e}ron \cite{MR1009077}, Arioli and Gazzola \cite{MR1741848}, and Degiovanni and Lancelotti \cite{MR2514055} for critical $p$-Laplacian problems,
and Perera and Zou \cite{PZ} for $p$-Laplacian problems involving critical Hardy-Sobolev exponents.


\section{Preliminaries}

\subsection{Cohomological index}

Let us recall the definition of the cohomological index. Let $W$ be a Banach space and let $\A$ denote the class of symmetric subsets of $W \setminus \set{0}$. For $A \in \A$, let $\overline{A} = A/\Z_2$ be the quotient space of $A$ with each $u$ and $-u$ identified, let $f : \overline{A} \to \RP^\infty$ be the classifying map of $\overline{A}$, and let $f^\ast : H^\ast(\RP^\infty) \to H^\ast(\overline{A})$ be the induced homomorphism of the Alexander-Spanier cohomology rings. The cohomological index of $A$ is defined by
\[
i(A) = \begin{cases}
\sup \set{m \ge 1 : f^\ast(\omega^{m-1}) \ne 0}, & A \ne \emptyset\\[5pt]
0, & A = \emptyset,
\end{cases}
\]
where $\omega \in H^1(\RP^\infty)$ is the generator of the polynomial ring $H^\ast(\RP^\infty) = \Z_2[\omega]$. For example, the classifying map of the unit sphere $S^{m-1}$ in $\R^m,\, m \ge 1$ is the inclusion $\RP^{m-1} \incl \RP^\infty$, which induces isomorphisms on $H^q$ for $q \le m - 1$, so $i(S^{m-1}) = m$.

The following proposition summarizes the basic properties of the cohomological index.

\begin{proposition}[Fadell-Rabinowitz {\cite[Theorem 5.1]{MR57:17677}}] \label{Proposition 2}
	The index $i : \A \to \N \cup \set{0,\infty}$ has the following properties:
	\begin{properties}{i}
		\item Definiteness: $i(A) = 0$ if and only if $A = \emptyset$;
		\item \label{i2} Monotonicity: If there is an odd continuous map from $A$ to $B$ (in particular, if $A \subset B$), then $i(A) \le i(B)$. Thus, equality holds when the map is an odd homeomorphism;
		\item Dimension: $i(A) \le \dim W$;
		\item Continuity: If $A$ is closed, then there is a closed neighborhood $N \in \A$ of $A$ such that $i(N) = i(A)$. When $A$ is compact, $N$ may be chosen to be a $\delta$-neighborhood $N_\delta(A) = \set{u \in W : \dist{u}{A} \le \delta}$;
		\item Subadditivity: If $A$ and $B$ are closed, then $i(A \cup B) \le i(A) + i(B)$;
		\item \label{i6} Stability: If $SA$ is the suspension of $A \ne \emptyset$, obtained as the quotient space of $A \times [-1,1]$ with $A \times \set{1}$ and $A \times \set{-1}$ collapsed to different points, then $i(SA) = i(A) + 1$;
		\item \label{i7} Piercing property: If $A$, $A_0$ and $A_1$ are closed, and $\varphi : A \times [0,1] \to A_0 \cup A_1$ is a continuous map such that $\varphi(-u,t) = - \varphi(u,t)$ for all $(u,t) \in A \times [0,1]$, $\varphi(A \times [0,1])$ is closed, $\varphi(A \times \set{0}) \subset A_0$ and $\varphi(A \times \set{1}) \subset A_1$, then $i(\varphi(A \times [0,1]) \cap A_0 \cap A_1) \ge i(A)$;
		\item Neighborhood of zero: If $U$ is a bounded closed symmetric neighborhood of $0$, then $i(\bdry{U}) = \dim W$.
	\end{properties}
\end{proposition}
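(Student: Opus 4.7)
The plan is to reduce every property to a single principle: $i(A)$ is the largest $m$ with $f^\ast(\omega^{m-1}) \ne 0$, where $f : \overline{A} \to \RP^\infty$ is the classifying map of the free $\Z_2$-action, and $H^\ast(\RP^\infty) = \Z_2[\omega]$ is a polynomial ring. Thus each property should follow from naturality of Alexander-Spanier cohomology together with the multiplicative structure.

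The formal properties come first. Definiteness $(i_1)$ is built into the definition. For monotonicity $(i_2)$, an odd continuous $g : A \to B$ descends to $\overline{g} : \overline{A} \to \overline{B}$, and the classifying maps satisfy $f_B \circ \overline{g} \simeq f_A$ up to homotopy, so $f_A^\ast = \overline{g}^\ast \circ f_B^\ast$; consequently nonvanishing of $f_A^\ast(\omega^{m-1})$ forces nonvanishing of $f_B^\ast(\omega^{m-1})$. The dimension bound $(i_3)$ uses that when $\dim W = n$, the quotient $\overline{A}$ has covering dimension at most $n$, forcing $H^q(\overline{A}) = 0$ for $q \ge n$. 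The neighborhood-of-zero property $(i_8)$ reduces, via the odd radial retraction, to the index of the unit sphere of $W$, which equals $\dim W$ because the classifying map of $S^{n-1}$ is the inclusion $\RP^{n-1} \hookrightarrow \RP^\infty$ inducing isomorphisms through degree $n-1$.

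For continuity $(i_4)$ I would invoke the tautness (continuity) property of Alexander-Spanier cohomology on closed symmetric neighborhoods, so that $H^\ast(\overline{N}) \to H^\ast(\overline{A})$ is an isomorphism in the relevant range for sufficiently small $N$; when $A$ is compact the neighborhood can be taken as a $\delta$-tube. Subadditivity $(i_5)$ follows from the cup-product vanishing principle: if $\omega^m$ restricts trivially on $\overline{A}$ and $\omega^n$ restricts trivially on $\overline{B}$, choose relative lifts in $H^\ast(\overline{A \cup B}, \overline{A})$ and $H^\ast(\overline{A \cup B}, \overline{B})$, and take their cup product in $H^\ast(\overline{A \cup B}, \overline{A \cup B}) = 0$, giving triviality of $\omega^{m+n}$ on $\overline{A \cup B}$. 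Stability $(i_6)$ is the suspension isomorphism: the classifying map of $SA$ factors through the mapping cone construction and multiplies the characteristic class by an extra $\omega$, yielding $i(SA) = i(A) + 1$.

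The main obstacle is the piercing property $(i_7)$, which is the technical heart of the argument and the step genuinely needed for the linking constructions in the paper. The plan would be to set $B = \varphi(A \times [0,1])$, $B_j = B \cap A_j$ for $j = 0,1$, and $C = B_0 \cap B_1$, then apply subadditivity to the decomposition $B = B_0 \cup B_1$ together with the relative long exact sequences of $(B, B_0)$ and $(B, B_1)$. The delicate point is to show that the restriction maps from $\overline{B}$ to $\overline{B_0}$ and $\overline{B_1}$ kill $\omega^{i(A)-1}$ only at the cost of picking up a nontrivial contribution on $\overline{C}$. I would accomplish this by chasing a cohomology class representing $\omega^{i(A)-1}$ through the Mayer-Vietoris sequence of $(\overline{B_0}, \overline{B_1})$, using that $\varphi$ pulls this class back to a nonzero element on $\overline{A \times [0,1]} \simeq \overline{A}$ by monotonicity. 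Carrying out this diagram chase carefully is where the real work lies.
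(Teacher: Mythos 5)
The paper does not prove this proposition: it is quoted verbatim from Fadell and Rabinowitz \cite{MR57:17677} as a known result, so there is no in-paper argument to compare yours against. Your sketch is the standard cohomological route and is largely sound in spirit, but two of the steps have genuine gaps.

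The dimension bound $(i_3)$ as you argue it is off by one. Covering dimension of $\overline{A}$ is at most $\dim W = n$ (the quotient by a free $\Z_2$-action on a subset of $\R^n$ has dimension $\le n$, not $n-1$), which only forces $H^q(\overline{A};\Z_2)=0$ for $q>n$, hence $f^\ast(\omega^{n})=0$ and $i(A)\le n+1$ — not the claimed $i(A)\le n$. The correct and shorter argument is the one you already deploy for $(i_8)$: radially project $A\subset W\setminus\{0\}$ equivariantly onto $S^{n-1}$ and invoke monotonicity together with $i(S^{n-1})=n$. The piercing property $(i_7)$ is where the real difficulty lies, and your Mayer-Vietoris sketch stops exactly at the hard point: knowing that $\omega^{i(A)-1}$ survives on $\overline{B}$ (with $B=\varphi(A\times[0,1])$) and that $B=B_0\cup B_1$ does \emph{not} by itself force $\omega^{i(A)-1}$ to survive on $\overline{B_0\cap B_1}$; the Mayer-Vietoris connecting map goes the wrong way for a direct chase, and the actual Fadell-Rabinowitz argument requires passing through relative cohomology and excision to transfer lifts across the two closed pieces. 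Since this property is the one the paper's linking construction actually rests on, the sketch as written does not yet constitute a proof of the part that matters.
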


\subsection{Abstract critical point theorems}
 We will proof  Theorem \ref{Theorem 1} and Theorem \ref{Theorem 2} using the following abstract critical point theorems
proved in Yang and Perera (cf.\ \cite[Theorem 2.2]{YaPe2}) and in Perera, Squassina, and Yang(cf.\ \cite[Theorem 2.2]{PeSqYa2})  respectively.

Recall that $I$ satisfies the Palais-Smale compactness condition at the level $c\in\R$ or the $(PS)_c$ condition for short, if every
sequence $\{u_j\}\subset W$ such that $I(u_j)\to c$ and $I'(u_j)\to0$ has a convergent subsequence.
\begin{theorem}
	\label{Theorem 2.1}
Let $W$ be a Banach space, let $S = \set{u \in W : \norm{u} = 1}$ be the unit sphere in $W$, and let $\pi : W \setminus \set{0} \to S,\, u \mapsto u/\norm{u}$ be the radial projection onto $S$. Let $I$ be a $C^1$-functional on $W$ and let $A_0$ and $B_0$ be disjoint nonempty closed symmetric subsets of $S$ such that
\[
i(A_0) = i(S \setminus B_0) < \infty.
\]
Assume that there exist $R > r > 0$ and $v \in S \setminus A_0$ such that
\[
\sup I(A) \le \inf I(B), \qquad \sup I(X) < \infty,
\]
where
\begin{align*}
A &= \set{tu : u \in A_0,\, 0 \le t \le R} \cup \set{R\, \pi((1 - t)\, u + tv) : u \in A_0,\, 0 \le t \le 1},\\
B &= \set{ru : u \in B_0},   \\
X &= \set{tu : u \in A,\, \norm{u} = R,\, 0 \le t \le 1}.
\end{align*}
Let $\Gamma = \set{\gamma \in C(X,W) : \gamma(X) \text{ is closed and} \restr{\gamma}{A} = \id[A]}$, and set
\[
c := \inf_{\gamma \in \Gamma}\, \sup_{u \in \gamma(X)}\, I(u).
\]
Then
\begin{equation} \label{43}
\inf I(B) \le c \le \sup I(X),
\end{equation}
in particular, $c$ is finite. If, in addition, $I$ satisfies the {\em \PS{c}} condition, then $c$ is a critical value of $I$.
\end{theorem}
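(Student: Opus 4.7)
The plan is to decompose the proof into three parts: (a) the upper bound $c \le \sup I(X)$; (b) the linking claim $\gamma(X) \cap B \ne \emptyset$ for every $\gamma \in \Gamma$, which yields the lower bound $c \ge \inf I(B)$; and (c) the conclusion that $c$ is a critical value once the $(PS)_c$ condition is assumed. Step (a) is immediate: since $\mathrm{id}_X \in \Gamma$, we have $c \le \sup_{u \in X} I(u)$.

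For step (b), the key observation is that $X$ is a cone with apex $0$ over the sphere slice $X_0 := A \cap \set{\norm{u} = R}$: every $u \in X$ lies on a radial segment $t \mapsto tw$, $t \in [0,1]$, joining $0$ to some $w \in X_0 \subset A$. Since $\gamma|_A = \mathrm{id}$ and $\set{0} \cup X_0 \subset A$, the image path $t \mapsto \gamma(tw)$ starts at $0$ and terminates at $w \in RS$, hence must cross $rS$. I would argue by contradiction: assuming $\gamma_0(X) \cap rB_0 = \emptyset$, the closed set $\Sigma := \gamma_0^{-1}(rS) \cap X$ is mapped by $\gamma_0/r$ into $S \setminus B_0$. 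Using the symmetry of $A_0$, the radial cone over $A_0$ sits symmetrically inside $X$, and extending $\gamma_0$ oddly across $-X$ is consistent on that symmetric inner cone. The crossings then produce an odd continuous map $A_0 \to S \setminus B_0$ together with an odd null-homotopy supplied by the arc from $A_0$ to the singleton $\set{v}$ inside $A$: at arc parameter $t=1$, all radial segments terminate at $Rv$, forcing the crossing map to be constant. Since a nonempty symmetric set cannot be mapped oddly to a single point of $S$, we obtain a contradiction with $i(A_0) = i(S \setminus B_0) < \infty$ via the monotonicity and stability properties $(i_2)$ and $(i_6)$ of Proposition \ref{Proposition 2}. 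A cleaner route is to reframe the construction as a direct application of the piercing property $(i_7)$ to the pair of closed sets $\set{\norm{\gamma_0} \le r}$ and $\set{\norm{\gamma_0} \ge r}$ in $X \cup (-X)$, avoiding explicit selection of crossing times altogether.

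For step (c), a standard quantitative deformation argument concludes the proof under $(PS)_c$. If $c$ were a regular value, then for $\varepsilon > 0$ so small that $\sup I(A) \le \inf I(B) - 2\varepsilon$, a deformation lemma would yield a continuous $\eta : W \to W$ with $\eta(I^{c+\varepsilon}) \subset I^{c-\varepsilon}$ and $\eta = \mathrm{id}$ on $I^{c - 2\varepsilon} \supset A$. Choosing $\gamma \in \Gamma$ with $\sup I(\gamma(X)) \le c + \varepsilon$, the composition $\eta \circ \gamma$ again belongs to $\Gamma$ (continuous, closed image, and identity on $A$), so $c \le \sup I(\eta \circ \gamma(X)) \le c - \varepsilon$, a contradiction.

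The main obstacle is step (b), namely the intersection/piercing construction: one must either select a crossing of $rS$ continuously along each radial segment or sidestep this via property $(i_7)$, handle the odd extension across $-X$ without inconsistency on the overlap $A \cap (-A)$, and verify that the arc piece of $A$ genuinely provides a null-homotopy inside $S \setminus B_0$ rather than merely in $S$. Once the linking $\gamma(X) \cap B \ne \emptyset$ is in hand, the rest of the minimax machinery is routine.
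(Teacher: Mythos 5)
The paper does not prove Theorem~\ref{Theorem 2.1}: it is imported verbatim from Yang--Perera (\cite[Theorem 2.2]{YaPe2}), so there is no internal proof to compare against, and I evaluate your proposal on its own.

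Your steps (a) and (c) are essentially right, with two small points worth noting in (c): the hypothesis gives only $\sup I(A) \le \inf I(B)$, not the strict inequality your choice of $\varepsilon$ presupposes, and one should take the deformation $\eta$ to be a homeomorphism so that $\eta\circ\gamma$ has closed image. Both are routine.

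The genuine gap is in step (b), exactly where you flag the obstacle yourself. You assert that the crossings ``produce an odd continuous map $A_0 \to S\setminus B_0$ together with an odd null-homotopy supplied by the arc from $A_0$ to the singleton $\{v\}$.'' That arc is \emph{not} odd: $h_t(u)=\pi((1-t)u+tv)$ satisfies $h_t(-u)=\pi(-(1-t)u+tv)\ne -h_t(u)$ for $t>0$, so the monotonicity and stability properties $(i_2)$, $(i_6)$ do not apply to it, and the ``cannot map a symmetric set oddly to a point'' contradiction never materializes. Likewise the ``cleaner route'' via $(i_7)$ is only gestured at: $(i_7)$ requires an odd $\varphi$ on some symmetric domain $A$, and you never specify $A$ or verify oddness, which again fails for the naive choice because of the arc toward $v$.

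The missing idea is to pair the arc toward $v$ with the arc toward $-v$ via the antipodal suspension $SA_0 = A_0\times[-1,1]/\!\sim$ with $\Z_2$-action $(u,s)\mapsto(-u,-s)$. Define $\psi:SA_0\to X_0\cup(-X_0)$ by $\psi(u,s)=R\,\pi\bigl((1-|s|)u+s\,v\bigr)$, which \emph{is} odd, and then for $\gamma\in\Gamma$ set $\varphi((u,s),\tau)=\gamma(\tau\psi(u,s))$ for $s\ge 0$ and $\varphi((u,s),\tau)=-\gamma(-\tau\psi(u,s))$ for $s\le 0$; the two formulas agree at $s=0$ because $\gamma|_A=\mathrm{id}$ and $\tau R(\pm u)\in A$. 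Then $\varphi$ is odd and continuous, $\varphi(SA_0\times\{0\})=\{0\}\subset\bar B_r$, $\varphi(SA_0\times\{1\})\subset RS$, and $\varphi(SA_0\times[0,1])=\gamma(X)\cup(-\gamma(X))$ is closed. Piercing $(i_7)$ and stability $(i_6)$ give $i(\varphi(SA_0\times[0,1])\cap rS)\ge i(SA_0)=i(A_0)+1$, whereas if $\gamma(X)\cap B=\emptyset$ then by symmetry of $B_0$ the left side sits in $r(S\setminus B_0)$ and monotonicity $(i_2)$ bounds it by $i(S\setminus B_0)=i(A_0)$, a contradiction. This antipodal-suspension/sign bookkeeping is precisely what your sketch of ``extending $\gamma_0$ oddly across $-X$'' is reaching for but does not carry out, and without it the linking claim $\gamma(X)\cap B\ne\emptyset$ is not established.
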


\noindent
Theorem \ref{Theorem 2} generalizes the linking theorem of Rabinowitz \cite{MR0488128}. The linking construction in its proof was also used in Perera and Szulkin \cite{MR2153141} to obtain nontrivial solutions of $p$-Laplacian problems with nonlinearities that interact with the spectrum. A similar construction based on the notion of cohomological linking was given in Degiovanni and Lancelotti \cite{MR2371112}. See also Perera et al.\! \cite[Proposition 3.23]{MR2640827}.\\

Now let $I$ be an even $C^1$-functional on $W$ and let $\A^\ast$ denote the class of
 symmetric subsets of $W$. Let $r > 0$, let $S_r = \set{u \in W : \norm{u} = r}$,
let $0 < b \le + \infty$, and let $\Gamma$ denote the group of odd homeomorphisms of $W$
that are the identity outside $I^{-1}(0,b)$. The pseudo-index of $M \in \A^\ast$ related to $i$, $S_r$,
and $\Gamma$ is defined \cite{MR84c:58014} by
\[
i^\ast(M) = \min_{\gamma \in \Gamma}\, i(\gamma(M) \cap S_r).
\]
The following critical point theorem generalizes \cite[Theorem 2.4]{MR713209}.

\begin{theorem} \label{Theorem 2.4}
Let $A_0,\, B_0$ be symmetric subsets of $S_1$ such that $A_0$ is compact, $B_0$ is closed, and
\[
i(A_0) \ge k + m, \qquad i(S_1 \setminus B_0) \le k
\]
for some $k, m \in \N$. Assume that there exists $R > r$ such that
\[
\sup I(A) \le 0 < \inf I(B), \qquad \sup I(X) < b,
\]
where $A = \set{Ru : u \in A_0}$, $B = \set{ru : u \in B_0}$, and $X = \set{tu : u \in A,\, 0 \le t \le 1}$. For $j = k + 1,\dots,k + m$, let
\[
\A_j^\ast = \set{M \in \A^\ast : M \text{ is compact and } i^\ast(M) \ge j}
\]
and set
\[
c_j^\ast := \inf_{M \in \A_j^\ast}\, \max_{u \in M}\, I(u).
\]
Then
\[
\inf I(B) \le c_{k+1}^\ast \le \dotsb \le c_{k+m}^\ast \le \sup I(X),
\]
in particular, $0 < c_j^\ast < b$. If, in addition, $I$ satisfies the {\em \PS{c}}
 condition for all $c \in (0,b)$, then each $c_j^\ast$ is a critical value of $I$
and there are $m$ distinct pairs of associated critical points.
\end{theorem}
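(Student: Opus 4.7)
The plan is to adapt the pseudo-index machinery of Bartolo, Benci and Fortunato \cite{MR84c:58014}, using the piercing property of the Fadell--Rabinowitz cohomological index (Proposition \ref{Proposition 2}\ref{i7}) to force a large pseudo-index on the set $X$. The argument splits into four steps: (1) show $i^\ast(X) \ge k+m$, so that each $\A_j^\ast$ is nonempty for $j = k+1,\dots,k+m$; (2) derive the chain of bounds $\inf I(B) \le c_{k+1}^\ast \le \cdots \le c_{k+m}^\ast \le \sup I(X)$; (3) under the \PS{c} condition, show each $c_j^\ast$ is a critical value; (4) upgrade this to $m$ distinct pairs of critical points via a standard index bump across coincident levels.

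The heart of the proof is step (1). Fix an arbitrary $\gamma \in \Gamma$; since $\gamma$ is odd one automatically has $\gamma(0)=0$. Define $\psi : A_0 \times [0,1] \to W$ by $\psi(u,t) = \gamma(tRu)$. This map is continuous and odd in $u$, and its image is compact (hence closed) because $A_0$ is compact. Now $\psi(u,0)=0$ lies in $\widetilde{A}_0 := \set{v \in W : \norm{v} \le r}$, while the assumption $\sup I(A) \le 0 < b$ places $A = RA_0$ outside $I^{-1}(0,b)$, so $\gamma|_A = \id[A]$ and therefore $\psi(u,1) = Ru \in \widetilde{A}_1 := \set{v \in W : \norm{v} \ge r}$ since $R > r$. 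Applying the piercing property to the pair $\widetilde{A}_0,\widetilde{A}_1$ (whose intersection is $S_r$) yields
\[
i\bigl(\psi(A_0 \times [0,1]) \cap S_r\bigr) \ge i(A_0) \ge k+m.
\]
Since $X = \set{tRu : u \in A_0,\, t \in [0,1]}$, we have $\psi(A_0 \times [0,1]) = \gamma(X)$, and hence $i(\gamma(X) \cap S_r) \ge k+m$ for every admissible $\gamma$. Therefore $i^\ast(X) \ge k+m$ and so $c_{k+m}^\ast \le \sup I(X) < b$.

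Step (2) is then short. Monotonicity $c_{k+1}^\ast \le \cdots \le c_{k+m}^\ast$ follows from $\A_{k+m}^\ast \subset \cdots \subset \A_{k+1}^\ast$. For the lower bound, pick any $M \in \A_{k+1}^\ast$; taking $\gamma = \id \in \Gamma$ in the definition of $i^\ast$ gives $i(M \cap S_r) \ge k+1$. Radial projection $u \mapsto u/\norm{u}$ is an odd continuous map, so if $M \cap S_r$ missed $B = rB_0$ its image would lie in $S_1 \setminus B_0$, forcing $i(M \cap S_r) \le i(S_1 \setminus B_0) \le k$ by Proposition \ref{Proposition 2}\ref{i2} --- a contradiction. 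Hence $M \cap B \neq \emptyset$ and $\max_M I \ge \inf I(B)$.

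Steps (3) and (4) are the classical pseudo-index arguments. If some $c_j^\ast \in (0,b)$ were a regular value, an equivariant pseudo-gradient deformation supported in $I^{-1}(0,b)$ produces $\eta \in \Gamma$ strictly decreasing $I$ on a near-minimizer $M$; because $\eta$ is an odd homeomorphism the pseudo-index is preserved, contradicting the infimum definition of $c_j^\ast$. For multiplicity, if $c_j^\ast = \cdots = c_{j+\ell}^\ast = c$, continuity and subadditivity of $i$ together with a deformation away from a small symmetric neighborhood of the critical set $K_c$ force $i(K_c) \ge \ell+1$, giving $\ell+1$ antipodal pairs of critical points at level $c$; summing over the distinct values among $c_{k+1}^\ast,\dots,c_{k+m}^\ast$ delivers $m$ distinct pairs. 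The principal obstacle is step (1): correctly realizing $\gamma(X)$ as $\psi(A_0 \times [0,1])$ and verifying that the two ends of the cylinder land in complementary closed half-spaces, so that the piercing property applies cleanly --- the remainder is a fairly mechanical transcription of the Bartolo--Benci--Fortunato scheme to the present group $\Gamma$ of odd homeomorphisms.
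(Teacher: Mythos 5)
Your proof is correct and follows the expected pseudo-index scheme. The paper does not contain its own proof of this theorem (it is cited from Perera--Squassina--Yang), but it explicitly points to the Fadell--Rabinowitz and Perera--Szulkin constructions, which is precisely the route you take: using the piercing property on the cylinder $\psi(A_0\times[0,1])=\gamma(X)$ to get $i^\ast(X)\ge k+m$, the identity map plus monotonicity against $S_1\setminus B_0$ to get the lower bound $\inf I(B)$, and the standard equivariant-deformation / subadditivity argument for criticality and multiplicity.
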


\begin{remark}
Constructions similar to the one in the proof of Theorem \ref{Theorem 2.4} have been used in Fadell and Rabinowitz \cite{MR57:17677} to prove bifurcation results for Hamiltonian systems, and in Perera and Szulkin \cite{PS} to obtain nontrivial solutions of $p$-Laplacian problems with nonlinearities that interact with the spectrum. See also \cite[Proposition 3.44]{MR2640827}.
\end{remark}

\subsection{Some estimates}

We have the following proposition from

\begin{proposition}[{\cite[Theorem 1.1, Lemma 2.1]{MM}}]
Let $1 < p < \infty$, $s \in (0,1)$, $N > sp$, $\alpha\in[0,sp)$, and let $S$ be as in \eqref{3}. Then
\begin{enumroman}
\item there exists a minimizer for $S$;
\item every minimizer $U$ is of  constant sign radially monotone; and if $\alpha>0$,
 then $U$ turns out to be radially  non-increasing around some point, which is zero.
\item for every minimizer $U$, there exists $\lambda_U > 0$ such that
\[
\int_{\R^{2N}} \frac{|U(x) - U(y)|^{p-2}\,(U(x) - U(y))\, (v(x) - v(y))}{|x - y|^{N+sp}}\, dx dy = \lambda_U \int_{\R^N}\frac{ |U|^{{p_s^\ast(\alpha)} - 2}}{|x|^\alpha}\,U\, v\, dx \quad \forall v \in {W}^{s,p}(\R^N).
\]
\end{enumroman}
\end{proposition}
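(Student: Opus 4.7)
The plan is to combine symmetric-decreasing rearrangement with a Brezis--Lieb concentration analysis to handle~(i), then exploit the equality case of the fractional P\'olya--Szeg\H{o} inequality for~(ii), and close with a standard Lagrange-multiplier computation for~(iii). Concretely, starting from a minimizing sequence $\{u_n\} \subset W^{s,p}(\R^N)$ normalised by $\pnorm[{p_s^\ast(\alpha)}]{u_n} = 1$ and $[u_n]_{s,p}^p \to S$, I would first replace $u_n$ by its symmetric-decreasing rearrangement $u_n^\ast$: the fractional P\'olya--Szeg\H{o} inequality gives $[u_n^\ast]_{s,p} \le [u_n]_{s,p}$, while the Hardy--Littlewood rearrangement inequality, applied to the radially decreasing weight $|x|^{-\alpha}$, gives $\int_{\R^N}|u_n^\ast|^{p_s^\ast(\alpha)}|x|^{-\alpha}\,dx \ge \int_{\R^N}|u_n|^{p_s^\ast(\alpha)}|x|^{-\alpha}\,dx$, so the sequence can be taken radial and non-increasing. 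Next I would break the scale invariance by imposing a normalisation such as $\int_{B_1}|u_n|^{p_s^\ast(\alpha)}|x|^{-\alpha}\,dx = \tfrac12$, extract a subsequence $u_n \wto U$ in $W^{s,p}(\R^N)$ with a.e.\ convergence, and apply the Brezis--Lieb lemma to both $[\,\cdot\,]_{s,p}^p$ and to the weighted $L^{p_s^\ast(\alpha)}$ integral. Writing $A = \int_{\R^N}|U|^{p_s^\ast(\alpha)}|x|^{-\alpha}\,dx$, the definition of $S$ then forces $1 \ge A^{p/p_s^\ast(\alpha)} + (1-A)^{p/p_s^\ast(\alpha)}$; since $p < p_s^\ast(\alpha)$, strict concavity gives $A\in\{0,1\}$, and the pointwise decay bound $|u(r)| \le C\,r^{-(N-sp)/p}[u]_{s,p}$ valid for radial $u$ excludes $A=0$ via the scale normalisation. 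Hence $U$ attains $S$, proving~(i).

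For~(ii), constant sign is immediate since $[|U|]_{s,p} \le [U]_{s,p}$ while $\pnorm[{p_s^\ast(\alpha)}]{|U|} = \pnorm[{p_s^\ast(\alpha)}]{U}$. Radial monotonicity I would get from the rigidity of the fractional P\'olya--Szeg\H{o} inequality: if $U \ge 0$ attains $S$, the two rearrangement inequalities $[U^\ast]_{s,p} \le [U]_{s,p}$ and $\pnorm[{p_s^\ast(\alpha)}]{U^\ast} \ge \pnorm[{p_s^\ast(\alpha)}]{U}$ must hold with equality, which forces $U$ to coincide with its symmetric rearrangement up to a translation. When $\alpha > 0$ the strict monotonicity of the weight $|x|^{-\alpha}$ makes any off-centre translation strictly decrease the weighted integral, pinning the centre of symmetry to the origin. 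For~(iii), $U$ is a $C^1$-constrained critical point of $u \mapsto [u]_{s,p}^p$ on $\{\pnorm[{p_s^\ast(\alpha)}]{u} = 1\}$, so the standard Lagrange-multiplier rule yields the stated weak identity with $\lambda_U = S\,\pnorm[{p_s^\ast(\alpha)}]{U}^{\,p - p_s^\ast(\alpha)} > 0$.

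The main obstacle is the rigidity invoked in~(ii): the statement that equality in the fractional P\'olya--Szeg\H{o} inequality forces the function to be a translate of its own symmetric rearrangement does not reduce to the local case by slicing, and is the most delicate analytic input, constituting precisely the content of the rearrangement/symmetrization theory developed in~\cite{MM} (in the spirit of Almgren--Lieb for fractional seminorms).
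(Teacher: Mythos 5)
The paper does not contain a proof of this Proposition: it is quoted verbatim as \cite[Theorem~1.1, Lemma~2.1]{MM} and used as a black box, so there is no ``paper's own proof'' to compare your sketch against. What I can do is assess the sketch on its own terms.

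Your outline follows the natural route (symmetric-decreasing rearrangement, Brezis--Lieb splitting, strict concavity of $t\mapsto t^{p/p_s^\ast(\alpha)}$, P\'olya--Szeg\H{o} rigidity, Lagrange multipliers), and the Lagrange-multiplier computation giving $\lambda_U = S\,\pnorm[{p_s^\ast(\alpha)}]{U}^{p - p_s^\ast(\alpha)}$ is correct. The identification of the equality case in the fractional P\'olya--Szeg\H{o} inequality as the delicate analytic ingredient for (ii) is also the right diagnosis, and it \emph{is} available in the literature (for nonnegative $u$, equality forces a translate of the rearrangement) -- though whether \cite{MM} actually re-proves that rigidity, rather than citing it and contributing mainly the decay estimates \eqref{10}, is not something you can infer from the present paper, so the final attribution sentence is a guess.

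There is, however, a genuine gap in the exclusion of $A=0$ in step (i). You invoke the radial pointwise bound $u(r)\le C\,r^{-(N-sp)/p}[u]_{s,p}$ together with the scale normalisation $\int_{B_1}|u_n|^{p_s^\ast(\alpha)}|x|^{-\alpha}\,dx=\tfrac12$ to rule out $U=0$. But this bound is exactly critical for the weighted $L^{p_s^\ast(\alpha)}$ integrand: since $p_s^\ast(\alpha)\cdot\frac{N-sp}{p}=N-\alpha$, substituting the bound gives
\[
|u_n(r)|^{p_s^\ast(\alpha)}\,r^{-\alpha}\,r^{N-1}\ \lesssim\ r^{-(N-\alpha)}\cdot r^{-\alpha}\cdot r^{N-1}\ =\ r^{-1},
\]
whose integral diverges logarithmically both near $r=0$ and as $r\to\infty$. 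So the pointwise estimate cannot, by itself, prevent the normalised $L^{p_s^\ast(\alpha)}(|x|^{-\alpha})$ mass from drifting to the origin (concentration) or to infinity (vanishing); the $A\in\{0,1\}$ dichotomy from Brezis--Lieb plus concavity leaves $A=0$ genuinely alive. Closing this requires a real concentration--compactness input: e.g.\ compactness of the radial embedding of $W^{s,p}$ into $L^{p_s^\ast(\alpha)}(|x|^{-\alpha}\,dx)$ on annuli $\{\eps<|x|<1/\eps\}$, combined with a dyadic argument to show the mass cannot all accumulate in the two critical regions, or equivalently Lions' second concentration--compactness lemma adapted to the Hardy--Sobolev weight (for which, when $\alpha>0$, concentration is automatically anchored at the origin). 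As written, the sentence ``the pointwise decay bound \dots excludes $A=0$ via the scale normalisation'' does not prove what it claims.
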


\noindent
In the following, we shall fix a radially symmetric nonnegative decreasing minimizer $U = U(r)$
for $S$. Multiplying $U$ by a positive constant if necessary, we may assume that
\begin{equation} \label{7}
(- \Delta)_p^s\, U = U^{{p_s^\ast(\alpha)} - 1}.
\end{equation}
Testing this equation with $U$ and using \eqref{3} shows that
\begin{equation} \label{8}
\norm{U}^p = \pnorm[{p_s^\ast(\alpha)}]{U}^{{p_s^\ast(\alpha)}} = S^{\frac{N-\alpha}{sp-\alpha}}.
\end{equation}
For any $\eps > 0$, the function
\begin{equation} \label{9}
U_\eps(x) = \frac{1}{\eps^{(N-sp)/p}}\; U\bigg(\frac{|x|}{\eps}\bigg)
\end{equation}
is also a minimizer for $S$ satisfying \eqref{7} and \eqref{8}, so after a rescaling we may assume that $U(0) = 1$. Henceforth, $U$ will denote such a normalized (with respect to constant multiples and rescaling) minimizer and $U_\eps$ will denote the associated family of minimizers given by \eqref{9}. In the absence of an explicit formula for $U$, we will use the following asymptotic estimates.

\begin{lemma} \label{Lemma 1}
There exist constants $c_1, c_2 > 0$ and $\theta > 1$ such that for all $r \ge 1$,
\begin{equation} \label{10}
\frac{c_1}{r^{(N-sp)/(p-1)}} \le U(r) \le \frac{c_2}{r^{(N-sp)/(p-1)}}
\end{equation}
and
\begin{equation}\label{11}
\frac{U(\theta\, r)}{U(r)} \le \half.
\end{equation}
\end{lemma}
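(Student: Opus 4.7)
The plan is to derive the two-sided decay \eqref{10} from the Euler-Lagrange equation satisfied by $U$ together with the asymptotic analysis of Marano et al.\ \cite{MM}, and then to obtain \eqref{11} as a short arithmetic consequence of \eqref{10}.

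For \eqref{10}, the guiding principle is that at infinity $U$ must behave like the fundamental solution of the unweighted homogeneous problem $(-\Delta)_p^s u = 0$, whose natural decay exponent is $(N-sp)/(p-1)$. For the upper bound I would build a radial barrier of the form $r \mapsto c\, r^{-(N-sp)/(p-1)}$ outside a large ball, verify that $(-\Delta)_p^s$ applied to such a function dominates the right-hand side of \eqref{7} (using that $U$ is bounded, radially nonincreasing around the origin, and that the weighted nonlinear source $|x|^{-\alpha} U^{p_s^\ast(\alpha)-1}$ decays fast enough once the upper envelope is in place), and then conclude by the comparison principle for the fractional $p$-Laplacian. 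For the lower bound I would combine the strict positivity and radial monotonicity of $U$ with a nonlocal Harnack-type inequality to propagate the value of $U$ at a fixed radius to the behaviour at infinity, matching the same fundamental rate. These are precisely the ingredients developed in \cite{MM}, so in practice the proof of \eqref{10} consists of invoking those results and fixing the constants $c_1, c_2 > 0$ in accordance with the normalization $U(0) = 1$ and \eqref{8}.

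Given \eqref{10}, the second inequality is elementary. For any $r \ge 1$ and $\theta \ge 1$,
\[
\frac{U(\theta r)}{U(r)} \le \frac{c_2}{c_1}\, \theta^{-(N-sp)/(p-1)},
\]
and the exponent $(N-sp)/(p-1)$ is strictly positive since $N > sp$ and $p > 1$. Hence choosing
\[
\theta := \left(\frac{2c_2}{c_1}\right)^{(p-1)/(N-sp)} \vee 2 \, > 1
\]
yields \eqref{11} uniformly for every $r \ge 1$.

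The main obstacle is the lower bound in \eqref{10}. The upper bound can be realized by a fairly direct barrier/comparison argument, but the matching lower bound is delicate in the absence of an explicit formula for $U$: one must rule out strictly faster-than-fundamental decay, which requires a careful nonlocal Harnack inequality that accommodates the weighted nonlinear right-hand side of the Euler-Lagrange equation near infinity. This is where the analysis of \cite{MM} does the bulk of the work; our role is to import those estimates and adapt the constants to the chosen normalization of $U$.
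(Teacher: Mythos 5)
Your proposal is correct and follows the same route as the paper: inequality \eqref{10} is simply quoted from Marano--Mosconi \cite{MM}, and \eqref{11} is deduced by exactly the arithmetic you give, choosing $\theta$ large enough so that $(c_2/c_1)\,\theta^{-(N-sp)/(p-1)} \le 1/2$. The additional paragraph speculating about how \cite{MM} establishes \eqref{10} (barriers, comparison, nonlocal Harnack) is a reasonable reconstruction of that reference but is not part of the present paper's argument, which does not re-prove the decay estimate.
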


\begin{proof}
The inequalities in \eqref{10} were proved in Marano et al.\! \cite{MM}. They imply
\[
\frac{U(\theta\, r)}{U(r)} \le \frac{c_2}{c_1}\, \frac{1}{\theta^{(N-sp)/(p-1)}},
\]
and \eqref{11} follows for sufficiently large $\theta$.
\end{proof}

We now construct some auxiliary functions and estimate their norms. In what follows $\theta$ is the universal constant in Lemma \ref{Lemma 1} that depends only on $N$, $p$, and $s$. We may assume without loss of generality that $0 \in \Omega$.
For $\eps, \delta > 0$, let
\[
m_{\eps,\delta} = \frac{U_\eps(\delta)}{U_\eps(\delta) - U_\eps(\theta \delta)},
\]
let
\[
g_{\eps,\delta}(t) = \begin{cases}
0, & 0 \le t \le U_\eps(\theta \delta)\\[5pt]
m_{\eps,\delta}^p\, (t - U_\eps(\theta \delta)), & U_\eps(\theta \delta) \le t \le U_\eps(\delta)\\[5pt]
t + U_\eps(\delta)\, (m_{\eps,\delta}^{p-1} - 1), & t \ge U_\eps(\delta),
\end{cases}
\]
and let
\begin{equation}
\label{defG}
G_{\eps,\delta}(t) = \int_0^t g_{\eps,\delta}'(\tau)^{1/p}\, d\tau = \begin{cases}
0, & 0 \le t \le U_\eps(\theta \delta)\\[5pt]
m_{\eps,\delta}\, (t - U_\eps(\theta \delta)), & U_\eps(\theta \delta) \le t \le U_\eps(\delta)\\[5pt]
t, & t \ge U_\eps(\delta).
\end{cases}
\end{equation}
The functions $g_{\eps,\delta}$ and $G_{\eps,\delta}$ are nondecreasing and absolutely continuous. Consider the radially symmetric nonincreasing function
\[
u_{\eps,\delta}(r) = G_{\eps,\delta}(U_\eps(r)),
\]
which satisfies
\begin{equation} \label{20}
u_{\eps,\delta}(r) = \begin{cases}
U_\eps(r), & r \le \delta\\[5pt]
0, & r \ge \theta \delta.
\end{cases}
\end{equation}
We have the following estimates for $u_{\eps,\delta}$.

\begin{lemma} \label{Lemma 3}
There exists a constant $C = C(N,p,s) > 0$ such that for any $\eps \le \delta/2$,
\begin{gather}
\label{21} \norm{u_{\eps,\delta}}^p \le S^{\frac{N-\alpha}{sp-\alpha}} + C \left(\frac{\eps}{\delta}\right)^{(N-sp)/(p-1)},\\[10pt]
\label{22} \pnorm{u_{\eps,\delta}}^p \ge \begin{cases}
\dfrac{1}{C}\; \eps^{sp}\, \log \bigg(\dfrac{\delta}{\eps}\bigg), & N = sp^2\\[10pt]
\dfrac{1}{C}\; \eps^{sp}, & N > sp^2,
\end{cases}\\[12.5pt]
\label{23} \pnorm[{p_s^\ast(\alpha)}]{u_{\eps,\delta}}^{{p_s^\ast(\alpha)}} \ge S^{\frac{N-\alpha}{sp-\alpha}} - C \left(\frac{\eps}{\delta}\right)^{\frac{N-\alpha}{p-1}}.
\end{gather}
\end{lemma}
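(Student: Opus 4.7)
My plan is to establish the three estimates \eqref{21}--\eqref{23} in sequence, each time exploiting the explicit structure of the truncation $u_{\eps,\delta} = G_{\eps,\delta}(U_\eps)$ together with the pointwise control on $U$ provided by Lemma~\ref{Lemma 1}. A feature used throughout is the uniform bound $m_{\eps,\delta} \le 2$, which follows from \eqref{11}: the doubling property gives $U_\eps(\theta \delta) \le U_\eps(\delta)/2$ as soon as $\delta/\eps \ge 1$, which is guaranteed by the hypothesis $\eps \le \delta/2$.

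For \eqref{21}, I would first record the pointwise composition inequality
\[
|G_{\eps,\delta}(a) - G_{\eps,\delta}(b)|^p \le |a - b|^{p-1}\, |g_{\eps,\delta}(a) - g_{\eps,\delta}(b)|, \qquad a, b \ge 0,
\]
which follows by applying H\"older with exponents $p$ and $p/(p-1)$ to $G_{\eps,\delta}(a) - G_{\eps,\delta}(b) = \int_b^a g_{\eps,\delta}'(\tau)^{1/p}\, d\tau$ from \eqref{defG}. Substituting $a = U_\eps(x)$, $b = U_\eps(y)$ and symmetrising gives
\[
\norm{u_{\eps,\delta}}^p \le \int_{\R^{2N}} \frac{|U_\eps(x) - U_\eps(y)|^{p-2}(U_\eps(x) - U_\eps(y))(g_{\eps,\delta}(U_\eps(x)) - g_{\eps,\delta}(U_\eps(y)))}{|x-y|^{N+sp}}\, dx\, dy,
\]
and the Euler--Lagrange equation \eqref{7} tested with $g_{\eps,\delta}(U_\eps)$ identifies the right-hand side as $\int_{\R^N} U_\eps^{p_s^\ast(\alpha) - 1}\, g_{\eps,\delta}(U_\eps)\, |x|^{-\alpha}\, dx$. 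On $\{|x| \le \delta\}$ the integrand is $U_\eps^{p_s^\ast(\alpha)}/|x|^\alpha$ plus a correction bounded by $C\, U_\eps(\delta)\, U_\eps^{p_s^\ast(\alpha) - 1}/|x|^\alpha$, while on $\{\delta \le |x| \le \theta \delta\}$ it is bounded by $C\, U_\eps(\delta)^{p_s^\ast(\alpha)}/|x|^\alpha$ because $g_{\eps,\delta}(U_\eps) \le m_{\eps,\delta}^p\, U_\eps(\delta)$ and $U_\eps(r) \le U_\eps(\delta)$ there. The main piece gives $S^{(N-\alpha)/(sp-\alpha)}$ by \eqref{8}, and inserting $U_\eps(\delta) \le c\, \eps^{(N-sp)/(p(p-1))}\, \delta^{-(N-sp)/(p-1)}$, obtained from \eqref{10} and the scaling \eqref{9}, one checks that each of the two correction terms is $O\big((\eps/\delta)^{(N-sp)/(p-1)}\big)$.

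For \eqref{22}, the identity $u_{\eps,\delta} = U_\eps$ on $B_\delta$ from \eqref{20} and the change of variable $y = x/\eps$ give
\[
\pnorm{u_{\eps,\delta}}^p \ge \int_{B_\delta} U_\eps^p\, dx = \eps^{sp} \int_{B_{\delta/\eps}} U(|y|)^p\, dy.
\]
Using $U(r) \ge c_1\, r^{-(N-sp)/(p-1)}$ from \eqref{10} on $\{r \ge 1\}$ and passing to polar coordinates produces an integrand of order $r^{(sp^2 - N)/(p-1) - 1}$ on $[1, \delta/\eps]$; since $\delta/\eps \ge 2$, integration yields $\log(\delta/\eps)$ when $N = sp^2$ and a strictly positive constant when $N > sp^2$. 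For \eqref{23}, the same identity on $B_\delta$ combined with \eqref{8} gives
\[
\pnorm[{p_s^\ast(\alpha)}]{u_{\eps,\delta}}^{p_s^\ast(\alpha)} \ge S^{\frac{N-\alpha}{sp-\alpha}} - \int_{\R^N \setminus B_\delta} \frac{U_\eps^{p_s^\ast(\alpha)}}{|x|^\alpha}\, dx,
\]
and the tail is controlled by the upper asymptotic $U_\eps(r) \le c\, \eps^{(N-sp)/(p(p-1))}\, r^{-(N-sp)/(p-1)}$. A polar-coordinate computation, simplified by the identities $(N-sp)\, p_s^\ast(\alpha)/(p-1) = p(N-\alpha)/(p-1)$ and $(N-sp)\, p_s^\ast(\alpha)/(p(p-1)) = (N-\alpha)/(p-1)$, delivers the required remainder $(\eps/\delta)^{(N-\alpha)/(p-1)}$; convergence of the radial tail integral is ensured by $\alpha < sp < N$.

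The main obstacle will be \eqref{21}: the composition inequality must be combined with a delicate case analysis of $g_{\eps,\delta}$ in the annulus $\{\delta < |x| < \theta \delta\}$, and one has to track several competing powers of $U_\eps(\delta)$ to see that both the interior correction and the annular contribution collapse to the single remainder $(\eps/\delta)^{(N-sp)/(p-1)}$. Once \eqref{21} is in place, \eqref{22} and \eqref{23} amount to direct computations with the explicit asymptotics of Lemma~\ref{Lemma 1}.
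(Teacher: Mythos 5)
Your proof is correct and follows essentially the same route as the paper's: the Brasco--Parini composition inequality together with testing the Euler--Lagrange equation for $U_\eps$ against $g_{\eps,\delta}(U_\eps)$ for \eqref{21}, and direct radial computations with the asymptotics of Lemma~\ref{Lemma 1} for \eqref{22} and \eqref{23}. The only cosmetic difference is in handling the error term of \eqref{21}: the paper writes $\int U_\eps^{p_s^\ast(\alpha)-1}g_{\eps,\delta}(U_\eps)/|x|^\alpha = \|U_\eps\|_{p_s^\ast(\alpha)}^{p_s^\ast(\alpha)} + \int (g_{\eps,\delta}(U_\eps)-U_\eps)U_\eps^{p_s^\ast(\alpha)-1}/|x|^\alpha$ and uses the pointwise bound $g_{\eps,\delta}(U_\eps)-U_\eps \le m_{\eps,\delta}^{p-1}U_\eps(\delta)$ on all of $\R^N$, whereas you split geometrically over $B_\delta$ and the annulus $\{\delta\le|x|\le\theta\delta\}$; both reduce to the same asymptotic $(\eps/\delta)^{(N-sp)/(p-1)}$ via \eqref{10}.
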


\begin{proof}
Using Brasco and Parini \cite[Lemma A.2]{BrPa} and testing the equation $(- \Delta)_p^s\, U_\eps = U_\eps^{{p_s^\ast(\alpha)} - 1}$ with $g_{\eps,\delta}(U_\eps) \in W^{s,p}_0(\Omega)$ gives
\begin{align*}
\norm{G_{\eps,\delta}(U_\eps)}^p & \le \int_{\R^{2N}} \frac{|U_\eps(x) - U_\eps(y)|^{p-2}\,(U_\eps(x) - U_\eps(y)) (g_{\eps,\delta}(U_\eps(x)) - g_{\eps,\delta}(U_\eps(y)))}{|x - y|^{N+sp}}\, dx dy  \\
&= \int_{\R^N} \frac{U_\eps(x)^{{p_s^\ast(\alpha)} - 1}\, g_{\eps,\delta}(U_\eps(x))}{|x|^\alpha}\, dx  \\
&= \pnorm[{p_s^\ast(\alpha)}]{U_\eps}^{{p_s^\ast(\alpha)}} + \int_{\R^N} \frac{(g_{\eps,\delta}(U_\eps(x)) - U_\eps(x))\, U_\eps(x)^{{p_s^\ast(\alpha)} - 1}}{|x|^\alpha}\, dx.
\end{align*}
We have $\pnorm[{p_s^\ast(\alpha)}]{U_\eps}^{{p_s^\ast(\alpha)}} = S^{\frac{N-\alpha}{sp-\alpha}}$ by \eqref{8},
\begin{align*}
g_{\eps,\delta}(U_\eps) - U_\eps \le U_\eps(\delta)\, m_{\eps,\delta}^{p-1} & = \frac{1}{\eps^{(N-sp)/p}}\; U\bigg(\frac{\delta}{\eps}\bigg)\! \left[1 - U\bigg(\dfrac{\theta \delta}{\eps}\bigg)\bigg/U\bigg(\dfrac{\delta}{\eps}\bigg)\right]^{-(p-1)}\\[5pt]
&\le 2^{p-1}\, c_2\, \frac{\eps^{(N-sp)/p(p-1)}}{\delta^{(N-sp)/(p-1)}}, 
\end{align*}
by \eqref{10} and \eqref{11},
\[
\int_{\R^N} \frac{U_\eps(x)^{{p_s^\ast(\alpha)} - 1}}{|x|^\alpha}\, dx = \eps^{(N-sp)/p} \int_{\R^N}\frac{ U(x)^{{p_s^\ast(\alpha)} - 1}}{|x|^\alpha}\, dx,
\]
and the last integral is finite by \eqref{10} again, so \eqref{21} follows.
Using \eqref{20},
\[
\int_{\R^N} u_{\eps,\delta}(x)^p\, dx \ge \int_{B_\delta(0)} u_{\eps,\delta}(x)^p\, dx = \int_{B_\delta(0)} U_\eps(x)^p\, dx = \eps^{sp} \int_{B_{\delta/\eps}(0)} U(x)^p\, dx,
\]
and the last integral is greater than or equal to
\[
\int_1^{\delta/\eps} U(r)^p\, r^{N-1}\, dr \ge c_1^p \int_1^{\delta/\eps} r^{-(N-sp^2)/(p-1)-1}\, dr
\]
by \eqref{10}. A direct evaluation of the integral on the right gives \eqref{22} since $\delta/\eps \ge 2$.
Using \eqref{20} again,
\begin{align*}
\int_{\R^N}\frac{ u_{\eps,\delta}(x)^{{p_s^\ast(\alpha)}}}{|x|^\alpha}\, dx &\ge \int_{B_\delta(0)}\frac{ u_{\eps,\delta}(x)^{{p_s^\ast(\alpha)}}}{|x|^\alpha}\, dx = \int_{B_\delta(0)} \frac{U_\eps(x)^{{p_s^\ast(\alpha)}}}{|x|^\alpha}\, dx \\
&= S^{\frac{N-\alpha}{sp-\alpha}} - \int_{B_{\delta/\eps}(0)^c}\frac{ U(x)^{{p_s^\ast(\alpha)}}}{|x|^\alpha}\, dx
\end{align*}
by \eqref{8}. By \eqref{10}, the last integral is less than or equal to
\[
c_2^{{p_s^\ast(\alpha)}} \int_{\delta/\eps}^\infty r^{(\alpha-N)/(p-1)-1}\, dr = \frac{(p - 1)\, c_2^{{p_s^\ast(\alpha)}}}{N-\alpha} \left(\frac{\eps}{\delta}\right)^{(N-\alpha)/(p-1)},
\]
so \eqref{23} follows.
\end{proof}

\noindent
We note that Lemma \ref{Lemma 3} gives the following estimate for
\[
S_{\eps,\delta}(\lambda) := \frac{\norm{u_{\eps,\delta}}^p - \lambda \pnorm{u_{\eps,\delta}}^p}{\pnorm[{p_s^\ast(\alpha)}]{u_{\eps,\delta}}^p}:
\]
there exists a constant $C = C(N,p,s) > 0$ such that for any $\eps \le \delta/2$,
\begin{equation} \label{40}
S_{\eps,\delta}(\lambda) \le \begin{cases}
S - \dfrac{\lambda}{C}\; \eps^{sp}\, \log \bigg(\dfrac{\delta}{\eps}\bigg) + C\, \bigg(\dfrac{\eps}{\delta}\bigg)^{sp}, & N = sp^2\\[10pt]
S - \dfrac{\lambda}{C}\; \eps^{sp} + C\, \bigg(\dfrac{\eps}{\delta}\bigg)^{(N-sp)/(p-1)}, & N > sp^2.
\end{cases}
\end{equation}

\begin{lemma}[{\cite[Proposition 3.1]{MPSY}}] \label{Proposition 2}
If $\lambda_k < \lambda_{k+1}$, then $\Psi^{\lambda_k}$ has a compact symmetric subset $E$ with $i(E) = k$.
\end{lemma}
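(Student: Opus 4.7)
The plan is to exploit the identity $i(\Psi^{\lambda_k}) = k$ given by \eqref{5} in conjunction with the Palais-Smale condition satisfied by the eigenvalue functional $\Psi$ on the sphere $\M$. Since the embedding $W^{s,p}_0(\Omega) \hookrightarrow L^p(\Omega)$ is compact, $\Psi(u) = 1/\pnorm{u}^p$ is $C^1$ on $\M$ and satisfies $(PS)$ there; in particular the critical set $K_{\lambda_k} = \set{u \in \M : \Psi'(u) = 0,\ \Psi(u) = \lambda_k}$ is compact and symmetric, and the gap $\lambda_k < \lambda_{k+1}$ rules out critical values in the interval $(\lambda_k, \lambda_{k+1})$.

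First, by the infimum characterization of $\lambda_k$, fix $\epsilon \in (0, \lambda_{k+1} - \lambda_k)$ and choose a closed symmetric set $M \in \F$ with $i(M) \ge k$ and $\sup_M \Psi \le \lambda_k + \epsilon$. Next, apply the classical deformation lemma around the compact set $K_{\lambda_k}$: there exist a closed symmetric neighborhood $\overline{\mathcal{N}}$ of $K_{\lambda_k}$ with $i(\overline{\mathcal{N}}) = i(K_{\lambda_k})$ (via the continuity property in Proposition \ref{Proposition 2}) and an odd continuous map $\eta : \M \to \M$ with $\eta(\Psi^{\lambda_k + \epsilon} \setminus \mathcal{N}) \subset \Psi^{\lambda_k}$. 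The candidate set is
\[
E := \bigl(\eta(M) \cap \Psi^{\lambda_k}\bigr) \cup K_{\lambda_k} \subset \Psi^{\lambda_k}.
\]
Monotonicity of $i$ under the odd continuous map $\eta|_M$ gives $i(\eta(M)) \ge i(M) \ge k$; combining the subadditivity of $i$ relative to the splitting $\eta(M) \subset \Psi^{\lambda_k} \cup \overline{\mathcal{N}}$ with the piercing property of Proposition \ref{Proposition 2} then yields $i(E) \ge k$, while the reverse inequality $i(E) \le i(\Psi^{\lambda_k}) = k$ is immediate from \eqref{5} and monotonicity.

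To make $E$ compact, I would first arrange $M$ to be compact via a finite-dimensional approximation: using the $(PS)$ property, the image of $M$ under a sufficiently long pseudo-gradient flow concentrates in a compact neighborhood of $K_{\lambda_k} \cup \Psi^{\lambda_k - \delta}$ for small $\delta > 0$, and this image retains index at least $k$ by the monotonicity of $i$ under the odd continuous flow. With $M$ compact, $\eta(M)$ is compact, so $\eta(M) \cap \Psi^{\lambda_k}$ is closed in a compact set and therefore compact; together with the compactness of $K_{\lambda_k}$ this makes $E$ compact. The principal technical obstacle is precisely this compactification while preserving the lower index bound, which relies on the careful interplay between the $(PS)$ condition, the deformation lemma, and the Alexander-Spanier nature of the cohomological index employed here.
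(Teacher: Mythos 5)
The paper does not prove this lemma itself; it cites \cite[Proposition 3.1]{MPSY}, so you are in effect reconstructing that proof. Your reconstruction has two genuine gaps, one of which is fatal as written.

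First, and most seriously, the compactness of $E$ is not established. Your proposed compactification, passing $M$ through a long pseudo-gradient flow ``so that it concentrates in a compact neighborhood of $K_{\lambda_k} \cup \Psi^{\lambda_k - \delta}$,'' cannot work: the sublevel set $\Psi^{\lambda_k - \delta}$ is a closed bounded subset of the infinite-dimensional sphere $\M$ and is therefore never compact (for $k \ge 2$ it is nonempty for small $\delta$). The flow of $\Psi$ does not concentrate in any compact set, so the image of a non-compact $M$ under a finite-time deformation is still non-compact. The ``finite-dimensional approximation'' alternative is only named, not carried out, and the essential difficulty — producing a \emph{compact} symmetric set of index $k$ inside a sublevel set — is precisely what needs proof. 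In the cited source this is handled differently: one starts from a set $M$ that is \emph{already} compact with $i(M) \ge k$ and $\sup_M \Psi < \lambda_{k+1}$ (this is available because the relevant minimax is taken over compact symmetric sets, or via a separate approximation lemma for the cohomological index), and compactness is then preserved automatically under the subsequent odd continuous map.

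Second, your lower bound $i(E) \ge k$ is asserted but never derived. Subadditivity, applied to the splitting $\eta(M) \subset \Psi^{\lambda_k} \cup \overline{\mathcal N}$, gives only $i(\eta(M) \cap \Psi^{\lambda_k}) \ge i(\eta(M)) - i(\overline{\mathcal N}) \ge k - i(K_{\lambda_k})$; it does not give a lower bound for the \emph{union} $E = (\eta(M) \cap \Psi^{\lambda_k}) \cup K_{\lambda_k}$, since subadditivity provides upper, not lower, bounds for unions. The invocation of the piercing property $(i_7)$ is not set up (what plays the roles of $A$, $A_0$, $A_1$, $\varphi$?), and it is unclear how it would close the gap. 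This whole complication is self-inflicted: you used the first deformation lemma, which requires excising a neighborhood $\mathcal N$ of the critical set. The cleaner and correct tool here is the \emph{second} deformation lemma: since $\Psi$ is even and satisfies $(PS)$, and there are no critical values in $(\lambda_k, \lambda_{k+1})$, for any $\lambda \in (\lambda_k, \lambda_{k+1})$ the sublevel set $\Psi^{\lambda_k}$ is an \emph{odd strong deformation retract} of $\Psi^{\lambda}$ — no excision needed, because the only critical level in $[\lambda_k, \lambda]$ is the endpoint $\lambda_k$ itself, which is where the retraction lands. Taking a compact symmetric $M \subset \Psi^{\lambda}$ with $i(M) \ge k$ and setting $E = \eta(M)$, where $\eta$ is the odd retraction, one gets: $E$ compact and symmetric, $E \subset \Psi^{\lambda_k}$, $i(E) \ge i(M) \ge k$ by monotonicity under the odd map $\eta$, and $i(E) \le i(\Psi^{\lambda_k}) = k$ by \eqref{5}. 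This avoids both the union/subadditivity tangle and the need to glue in $K_{\lambda_k}$ by hand.

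In short: keep the overall skeleton (use $(PS)$ for $\Psi$, use \eqref{5}, deform a near-minimax set into $\Psi^{\lambda_k}$), but replace the first deformation lemma by the second, drop the excised neighborhood and the ad hoc set $E$, and — crucially — identify where compactness of the starting set comes from, since that is the heart of the lemma.
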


In what follows
\[
\pi(u) = \frac{u}{\norm{u}},  \quad u \in W^{s,p}_0(\Omega) \setminus \set{0}
\]
are the radial projections onto
\[
\M = \bgset{u \in W^{s,p}_0(\Omega) : \norm{u} = 1}.
\]
.

\noindent
Now let $\theta$ be as in Lemma \ref{Lemma 1}, let $\eta \in C^\infty(\R^N,[0,1])$ be such that
\[
\eta(x) = \begin{cases}
0, & |x| \le 2 \theta\\[5pt]
1, & |x| \ge 3 \theta,
\end{cases}
\]
and let $\eta_\delta(x) = \eta\Big(\dfrac{x}{\delta}\Big)$ for $\delta > 0$.

\noindent
For $v \in E$, let $v_\delta = v \eta_\delta$, and let
\[
E_\delta = \set{\pi(v_\delta) : v \in E}.
\]

\begin{proposition}[{\cite[Proposition 3.2]{MPSY}}] \label{Proposition 3}
There exists a constant $C = C(N,\Omega,p,s,k) > 0$ such that for all sufficiently small $\delta > 0$,
\begin{gather}
\label{30} \frac{1}{C} \le \pnorm[q]{w} \le C \quad \forall w \in E_\delta,\, 1 \le q \le \infty,\\[7.5pt]
\label{31} \sup_{w \in E_\delta}\, \Psi(w) \le \lambda_k + C \delta^{N-sp},
\end{gather}
$E_\delta \cap \Psi_{\lambda_{k+1}} = \emptyset$, $i(E_\delta) = k$, and $\supp w \subset B_{2 \theta \delta}(0)^c$ for all $w \in E_\delta$. In particular, the supports of $w$ and $\pi(u_{\eps,\delta})$ are disjoint and hence $\pi(u_{\eps,\delta}) \not\in E_\delta$.
\end{proposition}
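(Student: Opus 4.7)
The plan is to exploit the fact that $v_\delta = v\eta_\delta$ is a small perturbation of $v$: it coincides with $v$ outside $B_{3\theta\delta}(0)$ and vanishes on $B_{2\theta\delta}(0)$. First I would record that the set $E$ produced by the preceding lemma, coming from the construction in \cite[Proposition 3.1]{MPSY}, is uniformly bounded in $L^\infty(\Omega)$, say $\pnorm[\infty]{v} \le M$ for all $v \in E$, by the regularity theory for $(-\Delta)_p^s$ applied to its elements. Combined with the compactness of $E$ in $W^{s,p}_0(\Omega)$, the continuity of the embeddings $W^{s,p}_0(\Omega)\hookrightarrow L^q(\Omega)$, and the fact that $0 \notin E$, this yields two-sided bounds $c_q \le \pnorm[q]{v} \le C_q$ for every $v \in E$ and every $q \in [1,\infty]$.

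The technical heart of the argument is the uniform estimate
\[
\sup_{v \in E}\bigl(\norm{v_\delta}^p - \norm{v}^p\bigr) \le C\, \delta^{N-sp},
\]
whose proof I would model on the $\alpha = 0$ computation in \cite[Proposition 3.2]{MPSY}. Expanding $v_\delta(x)-v_\delta(y) = \eta_\delta(y)(v(x)-v(y)) + v(x)(\eta_\delta(x)-\eta_\delta(y))$ in the Gagliardo double integral, using $0 \le \eta_\delta \le 1$ to dominate the first term by $[v]_{s,p}^p$, and controlling the cross terms via $|\nabla\eta_\delta| \le C/\delta$, the $L^\infty$ bound on $v$, and a careful splitting of the integration domain according to whether $|x-y|\le\delta$ or $|x-y|>\delta$, yields the stated $\delta^{N-sp}$ scaling. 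This nonlocal-cutoff estimate is the step I expect to require the most care; the Hardy weight $|x|^{-\alpha}$ plays no role here, since neither $\eta_\delta$ nor $v$ interacts with it.

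With the key estimate in hand, the remaining conclusions follow in sequence. Dominated convergence (using the $L^\infty$ bound) gives $\pnorm[q]{v_\delta} \to \pnorm[q]{v}$ uniformly in $v \in E$, so $\pnorm[q]{v_\delta}\in [c_q/2,\,2C_q]$ and $\norm{v_\delta}\in [1/C,C]$ for small $\delta$; dividing proves \eqref{30}. Since also $\pnorm[p]{v_\delta}^p \ge \pnorm[p]{v}^p - C\delta^N$ (from $|v-v_\delta|\le M$ on $B_{3\theta\delta}(0)$), writing
\[
\Psi(\pi(v_\delta)) = \frac{\norm{v_\delta}^p}{\pnorm[p]{v_\delta}^p} \le \frac{\norm{v}^p + C\delta^{N-sp}}{\pnorm[p]{v}^p - C\delta^N} \le \Psi(v) + C'\delta^{N-sp} \le \lambda_k + C'\delta^{N-sp}
\]
gives \eqref{31}, and for $\delta$ so small that $C'\delta^{N-sp} < \lambda_{k+1}-\lambda_k$ this forces $E_\delta\cap\Psi_{\lambda_{k+1}}=\emptyset$. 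The map $v\mapsto\pi(v\eta_\delta)$ is continuous and odd on $E$, so monotonicity of the cohomological index gives $i(E_\delta)\ge i(E)=k$; combined with $E_\delta \subset \M\setminus\Psi_{\lambda_{k+1}}$ and $i(\M\setminus\Psi_{\lambda_{k+1}})=k$ from \eqref{5}, this forces $i(E_\delta)=k$. The support statement is immediate from $\eta_\delta\equiv 0$ on $B_{2\theta\delta}(0)$, and since $\supp u_{\eps,\delta}\subset\overline{B_{\theta\delta}(0)}\subset B_{2\theta\delta}(0)$ by \eqref{20}, every $w\in E_\delta$ has support disjoint from that of $\pi(u_{\eps,\delta})$, giving $\pi(u_{\eps,\delta})\notin E_\delta$.
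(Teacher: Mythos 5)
The overall architecture of your proof (cut off $v$ near the origin, estimate the change in the Gagliardo seminorm and the $L^p$ norms, use compactness of $E$ to make these estimates uniform, and transfer the index via the odd map $v\mapsto\pi(v\eta_\delta)$) is the right one and matches the proof in \cite{MPSY}. However, there is a genuine gap at the very first step. You claim that the uniform bound $\pnorm[\infty]{v}\le M$ for $v\in E$ follows ``by the regularity theory for $(-\Delta)_p^s$ applied to its elements.'' This does not make sense: the elements of $E$ are not eigenfunctions, nor are they solutions of any equation. The set $E$ is merely a compact symmetric subset of the sublevel set $\Psi^{\lambda_k}\subset\M$, and a generic element of $\Psi^{\lambda_k}$ (a function with unit Gagliardo seminorm and $\pnorm[p]{v}^p\ge 1/\lambda_k$) can be unbounded; compactness of $E$ in the $W^{s,p}_0$ topology gives no control on $L^\infty$ norms. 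The uniform $L^\infty$ bound (together with the stronger regularity that is actually needed; see below) is not a by-product but is built into the \emph{construction} of $E$ in \cite[Proposition~3.1]{MPSY}, where $E$ is produced via an approximation/truncation argument so that it consists of nice functions. You cannot recover this by appealing to elliptic regularity.

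Two further points deserve care. First, your sketch of the seminorm estimate, based on the splitting $v_\delta(x)-v_\delta(y)=\eta_\delta(y)(v(x)-v(y))+v(x)(\eta_\delta(x)-\eta_\delta(y))$ together with a triangle/Young-type inequality, only yields $\norm{v_\delta}^p\le\norm{v}^p+C\delta^{(N-sp)/p}$: the cross term $\int A^{p-1}B\,d\mu\le[v]_{s,p}^{p-1}\bigl(\int B^p\,d\mu\bigr)^{1/p}$ dominates and produces the exponent $(N-sp)/p$, not $N-sp$. Getting the sharp exponent $N-sp$ requires a genuinely different argument (essentially a capacity-type computation for $\norm{v_\delta}^p-\norm{v}^p$, exploiting cancellation of the near-diagonal singularity), and also uses more than mere $L^\infty$ boundedness of the $v\in E$, which is another reason the regularity built into the construction of $E$ is essential. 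Second, ``dominated convergence'' does not give $\pnorm[\infty]{v_\delta}\to\pnorm[\infty]{v}$; indeed $\pnorm[\infty]{v-v_\delta}$ need not be small. What one actually proves is a two-sided bound on $\pnorm[\infty]{v_\delta}$ (the lower bound following, e.g., from the uniform lower bound on $\pnorm[p]{v_\delta}$ and the boundedness of $\Omega$), which is what \eqref{30} asserts. With these repairs the remaining steps of your argument (the estimate on $\Psi(\pi(v_\delta))$, the index computation via monotonicity and \eqref{5}, and the support statement) are correct.
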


\subsection{Palais Smale condition}
\noindent

Weak solutions of problem \eqref{1} coincide with critical points of the $C^1$-functional
\begin{equation*} \label{6}
I_\lambda(u) = \frac{1}{p} \norm{u}^p - \frac{\lambda}{p} \pnorm{u}^p - \frac{1}{p_s^\ast(\alpha)}\pnorm[{p_s^\ast(\alpha)}]{u}^{{p_s^\ast(\alpha)}}
 \quad u \in W^{s,p}_0(\Omega).
\end{equation*}

Next we give the following compactness result, which
will be crucial for applying Theorem \ref{Theorem 2.1}  and Theorem \ref{Theorem 2.4} to our functional $I_\lambda$.

\begin{proposition}
	\label{Proposition 1}
Let $1 < p < \infty$, $s \in (0,1)$, $0<\alpha<sp<N$, and let $S$ be as in \eqref{3}. Then for any $\lambda \in \R$, $I_\lambda$ satisfies the {\em \PS{c}} condition for all $c < \dfrac{sp-\alpha}{p(N-\alpha)}\, S^{(N-\alpha)/(sp-\alpha)}$.
\end{proposition}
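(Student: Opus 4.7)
Let $\{u_n\}\subset W^{s,p}_0(\Omega)$ be a Palais-Smale sequence at level $c$, so $I_\lambda(u_n)\to c$ and $I_\lambda'(u_n)\to 0$ in the dual. The plan is the standard concentration-compactness / Brezis-Lieb argument adapted to the nonlocal critical Hardy-Sobolev setting. First I would establish boundedness from the identity
\[
I_\lambda(u_n) - \frac{1}{p_s^\ast(\alpha)}\,\langle I_\lambda'(u_n),u_n\rangle = \frac{sp-\alpha}{p(N-\alpha)}\bigl(\norm{u_n}^p - \lambda\,\pnorm[p]{u_n}^p\bigr),
\]
whose left-hand side equals $c+o(1)+o(\norm{u_n})$, combined with the compact embedding $W^{s,p}_0(\Omega)\hookrightarrow L^p(\Omega)$ (arguing by contradiction, assuming $\norm{u_n}\to\infty$ after dividing by $\norm{u_n}^p$). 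Along a subsequence, $u_n\wto u$ in $W^{s,p}_0(\Omega)$, $u_n\to u$ in $L^p(\Omega)$, and $u_n\to u$ a.e.\ in $\Omega$.

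Next I would verify that $u$ is a weak solution of \eqref{1} by passing to the limit in $\langle I_\lambda'(u_n),v\rangle\to 0$ for every $v\in W^{s,p}_0(\Omega)$. The nonlocal quasilinear term is handled, as in Mosconi et al.\ \cite{MPSY}, by invoking a.e.\ convergence of the finite differences $(u_n(x)-u_n(y))/|x-y|^{(N+sp)/p}$ together with a Vitali-type argument; the critical Hardy-Sobolev term is handled via a.e.\ convergence of $u_n$ and the uniform bound on $|u_n|^{p_s^\ast(\alpha)-2}u_n/|x|^\alpha$ in $L^{p_s^\ast(\alpha)/(p_s^\ast(\alpha)-1)}(\Omega)$ by duality.

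Setting $v_n := u_n - u$, the (nonlocal) Brezis-Lieb lemma yields
\begin{align*}
\norm{u_n}^p &= \norm{u}^p + \norm{v_n}^p + o(1),\\
\int_\Omega \frac{|u_n|^{p_s^\ast(\alpha)}}{|x|^\alpha}\,dx &= \int_\Omega \frac{|u|^{p_s^\ast(\alpha)}}{|x|^\alpha}\,dx + \int_\Omega \frac{|v_n|^{p_s^\ast(\alpha)}}{|x|^\alpha}\,dx + o(1).
\end{align*}
Substituting into $\langle I_\lambda'(u_n),u_n\rangle=o(1)$ and using strong $L^p$-convergence together with $\langle I_\lambda'(u),u\rangle=0$ reduces to $\norm{v_n}^p - \int_\Omega |v_n|^{p_s^\ast(\alpha)}/|x|^\alpha\,dx = o(1)$. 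Setting $\ell := \lim \norm{v_n}^p = \lim \int_\Omega |v_n|^{p_s^\ast(\alpha)}/|x|^\alpha\,dx$, the inequality $S\bigl(\int_\Omega |v_n|^{p_s^\ast(\alpha)}/|x|^\alpha\,dx\bigr)^{p/p_s^\ast(\alpha)} \le \norm{v_n}^p$ from \eqref{3} forces the dichotomy $\ell = 0$ or $\ell \ge S^{(N-\alpha)/(sp-\alpha)}$.

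Finally, plugging the splitting into $I_\lambda(u_n)\to c$ and using
\[
I_\lambda(u) \;=\; \frac{sp-\alpha}{p(N-\alpha)}\int_\Omega \frac{|u|^{p_s^\ast(\alpha)}}{|x|^\alpha}\,dx \;\ge\; 0
\]
(which follows from $\langle I_\lambda'(u),u\rangle = 0$) gives
\[
c \;=\; I_\lambda(u) + \frac{sp-\alpha}{p(N-\alpha)}\,\ell \;\ge\; \frac{sp-\alpha}{p(N-\alpha)}\,\ell,
\]
so the hypothesis $c < \frac{sp-\alpha}{p(N-\alpha)}\,S^{(N-\alpha)/(sp-\alpha)}$ rules out the alternative $\ell \ge S^{(N-\alpha)/(sp-\alpha)}$. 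Hence $\ell = 0$ and $u_n\to u$ strongly in $W^{s,p}_0(\Omega)$. The principal technical obstacle is the justification of the nonlocal Brezis-Lieb identity for the Gagliardo seminorm and the limit-passage in the quasilinear nonlocal term $\langle(-\Delta)_p^s u_n, v\rangle$; both rest on a.e.\ convergence of the finite differences, which is a delicate point when $p\ne 2$ but by now a well-established tool in the fractional $p$-Laplacian literature.
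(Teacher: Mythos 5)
Your proposal follows essentially the same Brezis--Lieb/energy-comparison strategy as the paper: bound the Palais--Smale sequence, extract a weak limit that solves the equation, split $u_n = u + v_n$ via the nonlocal Brezis--Lieb lemma for the Gagliardo seminorm (the paper cites \cite[Lemma 5]{PeSqYa2}) and for the weighted $L^{p_s^\ast(\alpha)}$ norm, derive $\norm{v_n}^p = \pnorm[p_s^\ast(\alpha)]{v_n}^{p_s^\ast(\alpha)} + o(1)$, and rule out concentration using $I_\lambda(u)\ge 0$ and $c < \frac{sp-\alpha}{p(N-\alpha)}S^{(N-\alpha)/(sp-\alpha)}$. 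The one genuine difference is the boundedness step: the paper takes $I_\lambda(u_j) - \tfrac{1}{p}I_\lambda'(u_j)u_j$, which isolates $\pnorm[p_s^\ast(\alpha)]{u_j}^{p_s^\ast(\alpha)} = O(1) + o(\norm{u_j})$ directly, then feeds this into \eqref{V} and \eqref{88} to bootstrap a bound on $\norm{u_j}$; you instead take $I_\lambda(u_n) - \tfrac{1}{p_s^\ast(\alpha)}\langle I_\lambda'(u_n),u_n\rangle$, normalize, and argue by contradiction. Your route does work, but as written it stops one step short: after dividing you only obtain $\lambda\pnorm[p]{w_n}^p \to 1$ with $w_n = u_n/\norm{u_n}\rightharpoonup w$ and $w\ne 0$, which alone is not a contradiction. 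You additionally need to divide $\langle I_\lambda'(u_n),u_n\rangle = o(\norm{u_n})$ by $\norm{u_n}^p$ to see that $\norm{u_n}^{p_s^\ast(\alpha)-p}\pnorm[p_s^\ast(\alpha)]{w_n}^{p_s^\ast(\alpha)}\to 0$, hence $\pnorm[p_s^\ast(\alpha)]{w_n}\to 0$ and, via \eqref{V}, $\pnorm[p]{w_n}\to 0$, which contradicts $\lambda\pnorm[p]{w}^p = 1$. The paper's combination avoids the normalization and contradiction entirely and is slightly cleaner; otherwise the two proofs are the same.
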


\begin{proof}

Let $c < \dfrac{sp-\alpha}{p(N-\alpha)}\, S^{(N-\alpha)/(sp-\alpha)}$ and let $\seq{u_j}$ be a sequence in $W^{s,p}_0(\Omega)$ such that
	\begin{gather}
	\label{88} I_\lambda(u_j) = \frac{1}{p} \norm{u_j}^p - \frac{\lambda}{p} \pnorm[p]{u_j}^p - \frac{1}{p_s^\ast(\alpha)} \pnorm[p_s^\ast(\alpha)]{u_j}^{p_s^\ast(\alpha)} = c + \o(1),\\[10pt]
	\label{99} \begin{split}
	I_\lambda'(u_j)\, v = & \int_{\R^{2N}} \frac{|u_j(x) - u_j(y)|^{p-2}\, (u_j(x) - u_j(y))\, (v(x) - v(y))}{|x - y|^{N+sp}}\, dx dy\\[5pt]
	& - \lambda \int_\Omega |u_j|^{p-2}\, u_j\, v\, dx - \int_\Omega\frac{|u_j|^{p_s^\ast(\alpha) - 2}}{|x|^\alpha}\, u_j\, v\, dx = \o(\norm{v}) \quad \forall v \in W^{s,p}_0(\Omega),
	\end{split}
	\end{gather}
	as $j \to \infty$. Then
	\[
	\frac{sp-\alpha}{p(N-\alpha)} \pnorm[p_s^\ast(\alpha)]{u_j}^{p_s^\ast(\alpha)} = I_\lambda(u_j) - \frac{1}{p}\, I_\lambda'(u_j)\, u_j = \o(\norm{u_j}) + O(1),
	\]
	which together with \eqref{88} and \eqref{V} shows that $\seq{u_j}$ is bounded in $W^{s,p}_0(\Omega)$. So a renamed subsequence of $\seq{u_j}$ converges to some $u$ weakly in $W^{s,p}_0(\Omega)$, strongly in $L^r(\Omega)$ for all $r \in [1,p_s^\ast)$, and a.e.\! in $\Omega$ (see Di Nezza et al.\! \cite[Corollary 7.2]{MR2944369}). Denoting by $p' = p/(p - 1)$ the H\"{o}lder conjugate of $p$, $|u_j(x) - u_j(y)|^{p-2}\, (u_j(x) - u_j(y))/|x - y|^{(N+sp)/p'}$ is bounded in $L^{p'}(\R^{2N})$ and converges to $|u(x) - u(y)|^{p-2}\, (u(x) - u(y))/|x - y|^{(N+sp)/p'}$ a.e.\! in $\R^{2N}$, and $(v(x) - v(y))/|x - y|^{(N+sp)/p} \in L^p(\R^{2N})$, so the first integral in \eqref{99} converges to
	\[
	\int_{\R^{2N}} \frac{|u(x) - u(y)|^{p-2}\, (u(x) - u(y))\, (v(x) - v(y))}{|x - y|^{N+sp}}\, dx dy
	\]
	for a further subsequence. Moreover,
	\[
	\int_\Omega |u_j|^{p-2}\, u_j\, v\, dx \to \int_\Omega |u|^{p-2}\, uv\, dx,
	\]
	and
	\[
	\int_\Omega\frac{|u_j|^{p_s^\ast(\alpha) - 2}}{|x|^\alpha}\, u_j\, v\, dx \to \int_\Omega \frac{|u|^{p_s^\ast(\alpha) - 2}}{|x|^\alpha}\, uv\, dx
	\]
	since

 $|u_j(x)|^{(p^\ast_s(\alpha)-2)}u_j(x)/|x |^{\alpha/p^\ast_s(\alpha)'}$ is bounded in $L^{p^\ast_s(\alpha)'}(\Omega)$ and converges to $|u(x)|^{p^\ast_s(\alpha)-2}u(x)/|x|^{\alpha/p^\ast_s(\alpha)'}$ a.e.\! in $\Omega$, and $v(x)/|x |^{\alpha/p^\ast_s(\alpha)} \in L^{p^\ast_s(\alpha)}(\Omega)$. So passing to the limit in \eqref{99} shows that $u \in W^{s,p}_0(\Omega)$ is a weak solution of \eqref{1}.
	
	Setting $\widetilde{u}_j = u_j - u$, we will show that $\widetilde{u}_j \to 0$ in $W^{s,p}_0(\Omega)$. We have
	\begin{equation} \label{111}
	\norm{\widetilde{u}_j}^p = \norm{u_j}^p - \norm{u}^p + \o(1)
	\end{equation}
	by Lemma \cite[Lemma 5]{PeSqYa2}, and
	\begin{equation}
	\pnorm[p_s^\ast(\alpha)]{\widetilde{u}_j}^{p_s^\ast(\alpha)} = \pnorm[p_s^\ast(\alpha)]{u_j}^{p_s^\ast(\alpha)} - \pnorm[p_s^\ast(\alpha)]{u}^{p_s^\ast(\alpha)} + \o(1)
	\end{equation}
	by the Br{\'e}zis-Lieb lemma \cite[Theorem 1]{MR699419}. Taking $v = u_j$ in \eqref{99} gives
	\begin{equation} \label{14}
	\norm{u_j}^p = \lambda \pnorm[p]{u}^p + \pnorm[p_s^\ast(\alpha)]{u_j}^{p_s^\ast(\alpha)} + \o(1)
	\end{equation}
	since $\seq{u_j}$ is bounded in $W^{s,p}_0(\Omega)$ and converges to $u$ in $L^p(\Omega)$, and testing \eqref{99} with $v = u$ gives
	\begin{equation} \label{12}
	\norm{u}^p = \lambda \pnorm[p]{u}^p + \pnorm[p_s^\ast(\alpha)]{u}^{p_s^\ast(\alpha)}.
	\end{equation}
	It follows from \eqref{111}--\eqref{12} and \eqref{3} that
	\[
	\norm{\widetilde{u}_j}^p = \pnorm[p_s^\ast(\alpha)]{\widetilde{u}_j}^{p_s^\ast(\alpha)} + \o(1) \le \frac{\norm{\widetilde{u}_j}^{p_s^\ast(\alpha)}}{S^{p_s^\ast(\alpha)/p}} + \o(1),
	\]
	so
	\begin{equation} \label{13}
	\norm{\widetilde{u}_j}^p \big(S^{p_s^\ast(\alpha)/p} - \norm{\widetilde{u}_j}^{p_s^\ast(\alpha) - p}\big) \le \o(1).
	\end{equation}
	On the other hand,
	\[
	\begin{aligned}
	c & = \frac{1}{p} \norm{u_j}^p - \frac{\lambda}{p} \pnorm[p]{u}^p - \frac{1}{p_s^\ast(\alpha)} \pnorm[p_s^\ast(\alpha)]{u_j}^{p_s^\ast(\alpha)} + \o(1) && \text{by \eqref{88}}\\[10pt]
	& = \frac{sp-\alpha}{p(N-\alpha)}\, \big(\norm{u_j}^p - \lambda \pnorm[p]{u}^p\big) + \o(1) && \text{by \eqref{14}}\\[10pt]
	& = \frac{sp-\alpha}{p(N-\alpha)}\, \big(\norm{\widetilde{u}_j}^p + \norm{u}^p - \lambda \pnorm[p]{u}^p\big) + \o(1) && \text{by \eqref{11}}\\[10pt]
	& = \frac{sp-\alpha}{p(N-\alpha)}\, \big(\norm{\widetilde{u}_j}^p + \pnorm[p_s^\ast(\alpha)]{u}^{p_s^\ast(\alpha)}\big) + \o(1) && \text{by \eqref{12}}\\[10pt]
	& \ge \frac{sp-\alpha}{p(N-\alpha)} \norm{\widetilde{u}_j}^p + \o(1),
	\end{aligned}
	\]
	so
	\begin{equation} \label{15}
	\limsup_{j \to \infty}\, \norm{\widetilde{u}_j}^p \le \frac{p(N-\alpha)c}{sp-\alpha} < S^{(N-\alpha)/(sp-\alpha)}.
	\end{equation}
	It follows from \eqref{13} and \eqref{15} that $\norm{\widetilde{u}_j} \to 0$.
\end{proof}

\medskip

\section{Proof of Theorem \ref{Theorem 1}}

In this section we prove Theorem \ref{Theorem 1}. For $0 < \lambda < \lambda_1$, mountain pass theorem and \eqref{40} will give us a positive critical level of $I_\lambda$ below the threshold level for compactness given in Proposition \ref{Proposition 1}. For $\lambda \ge \lambda_1$, we will use the abstract linking theorem, Theorem \ref{Theorem 2}.

\subsection{Case 1: $N \ge sp^2$ and $0 < \lambda < \lambda_1$}

We have
\[
I_\lambda(u) \ge \frac{1}{p} \left(1 - \frac{\lambda}{\lambda_1}\right) \norm{u}^p - \frac{1}{{p_s^\ast(\alpha)}\, S^{{p_s^\ast(\alpha)}/p}} \norm{u}^{{p_s^\ast(\alpha)}},
\]
so the origin is a strict local minimizer of $I_\lambda$. Fix $\delta > 0$ so small that $B_{\theta \delta}(0) \strictsubset \Omega$, so that $\supp u_{\eps,\delta} \subset \Omega$ by \eqref{20}. Noting that
\[
I_\lambda(Ru_{\eps,\delta}) = \frac{R^p}{p} \left(\norm{u_{\eps,\delta}}^p - \lambda \pnorm{u_{\eps,\delta}}^p\right) - \frac{R^{{p_s^\ast(\alpha)}}}{{p_s^\ast(\alpha)}} \pnorm[{p_s^\ast(\alpha)}]{u_{\eps,\delta}}^{{p_s^\ast(\alpha)}} \to - \infty \quad \text{as } R \to + \infty,
\]
fix $R_0 > 0$ so large that $I_\lambda(R_0 u_{\eps,\delta}) < 0$. Then let
\[
\Gamma = \set{\gamma \in C([0,1],W^{s,p}_0(\Omega)) : \gamma(0) = 0,\, \gamma(1) = R_0 u_{\eps,\delta}}
\]
and set
\[
c := \inf_{\gamma \in \Gamma}\, \max_{t \in [0,1]}\, I_\lambda(\gamma(t)) > 0.
\]
Since $t \mapsto tR_0 u_{\eps,\delta}$ is a path in $\Gamma$,
\begin{equation} \label{24}
c \le \max_{t \in [0,1]}\, I_\lambda(tR_0 u_{\eps,\delta}) = \frac{sp-\alpha}{p(N-\alpha)} \left(\frac{\norm{u_{\eps,\delta}}^p - \lambda \pnorm{u_{\eps,\delta}}^p}{\pnorm[{p_s^\ast(\alpha)}]{u_{\eps,\delta}}^p}\right)^{\frac{N-\alpha}{sp-\alpha}} = \frac{sp-\alpha}{p(N-\alpha)}\, S_{\eps,\delta}(\lambda)^{\frac{N-\alpha}{sp-\alpha}}.
\end{equation}
By \eqref{40},
\[
S_{\eps,\delta}(\lambda) \le \begin{cases}
S + \left(C - \dfrac{\lambda}{C} \abs{\log \eps}\right) \eps^{sp}, & N = sp^2\\[10pt]
S - \left(\dfrac{\lambda}{C} - C\, \eps^{(N-sp^2)/(p-1)}\right) \eps^{sp}, & N > sp^2,
\end{cases}
\]
so $S_{\eps,\delta}(\lambda) < S$ if $\eps > 0$ is sufficiently small. So
$$
c < \dfrac{sp-\alpha}{p(N-\alpha)}\, S^{\frac{N-\alpha}{sp-\alpha}}
$$
by \eqref{24}, and hence $I_\lambda$ satisfies the \PS{c} condition by Proposition \ref{Proposition 1}. Then $c$ is a critical level of $I_\lambda$ by the mountain pass theorem.

\subsection{Case 2: $N > sp^2$ and $\lambda > \lambda_1$ is not one of the eigenvalues $\lambda_k$}

We have $\lambda_k < \lambda < \lambda_{k+1}$ for some $k \in \N$, and then $i(\Psi^{\lambda_k}) = i(\M \setminus \Psi_{\lambda_{k+1}}) = k$ by \eqref{5}. Fix $\lambda'$ such that $\lambda_k < \lambda' < \lambda < \lambda_{k+1}$, and let $\delta > 0$ be so small that the conclusions of Proposition \ref{Proposition 3} hold with $\lambda_k + C \delta^{N-sp} < \lambda'$, in particular,
\begin{equation} \label{41}
\Psi(w) < \lambda' \quad \forall w \in E_\delta.
\end{equation}
Then take $A_0 = E_\delta$ and $B_0 = \Psi_{\lambda_{k+1}}$, and note that $A_0$ and $B_0$ are disjoint nonempty closed symmetric subsets of $\M$ such that
\[
i(A_0) = i(\M \setminus B_0) = k
\]
by Proposition \ref{Proposition 3} and \eqref{5}. Now let $0 < \eps \le \delta/2$, let $R > r > 0$, let $v_0 = \pi(u_{\eps,\delta}) \in \M \setminus E_\delta$, and let $A$, $B$ and $X$ be as in Theorem \ref{Theorem 2.1}.

For $u \in \Psi_{\lambda_{k+1}}$,
\[
I_\lambda(ru) \ge \frac{1}{p} \left(1 - \frac{\lambda}{\lambda_{k+1}}\right) r^p - \frac{1}{{p_s^\ast(\alpha)}\, S^{{p_s^\ast(\alpha)}/p}}\, r^{{p_s^\ast(\alpha)}}.
\]
Since $\lambda < \lambda_{k+1}$, it follows that $\inf I_\lambda(B) > 0$ if $r$ is sufficiently small.

Next we show that $I_\lambda \le 0$ on $A$ if $R$ is sufficiently large. For $w \in E_\delta$ and $t \ge 0$,
\[
I_\lambda(tw) \le \frac{t^p}{p} \left(1 - \frac{\lambda}{\Psi(w)}\right) \le 0
\]
by \eqref{41}. Now let $w \in E_\delta$ and $0 \le t \le 1$, and set $u = \pi((1 - t)\, w + tv_0)$. Clearly, $\norm{(1 - t)\, w + tv_0} \le 1$, and since the supports of $w$ and $v_0$ are disjoint by Proposition \ref{Proposition 3},
\[
\pnorm[{p_s^\ast(\alpha)}]{(1 - t)\, w + tv_0}^{{p_s^\ast(\alpha)}} = (1 - t)^{{p_s^\ast(\alpha)}} \pnorm[{p_s^\ast(\alpha)}]{w}^{{p_s^\ast(\alpha)}} + t^{{p_s^\ast(\alpha)}} \pnorm[{p_s^\ast(\alpha)}]{v_0}^{{p_s^\ast(\alpha)}}.
\]
In view of \eqref{30} and since
\begin{equation} \label{42}
\pnorm[{p_s^\ast(\alpha)}]{v_0}^{{p_s^\ast(\alpha)}} = \frac{\pnorm[{p_s^\ast(\alpha)}]{u_{\eps,\delta}}^{{p_s^\ast(\alpha)}}}{\norm{u_{\eps,\delta}}^{{p_s^\ast(\alpha)}}} \ge \frac{1}{S^{(N-\alpha)/(N-sp)}} + \O(\eps^{(N-sp)/(p-1)})
\end{equation}
by Lemma \ref{Lemma 3}, it follows that
\[
\pnorm[{p_s^\ast(\alpha)}]{u}^{{p_s^\ast(\alpha)}} = \frac{\pnorm[{p_s^\ast(\alpha)}]{(1 - t)\, w + tv_0}^{{p_s^\ast(\alpha)}}}{\norm{(1 - t)\, w + tv_0}^{{p_s^\ast(\alpha)}}} \ge \frac{1}{C}
\]
if $\eps$ is sufficiently small, where $C = C(N,\Omega,p,s,k) > 0$. Then
\[
I_\lambda(Ru) \le \frac{R^p}{p} - \frac{R^{{p_s^\ast(\alpha)}}}{{p_s^\ast(\alpha)}} \pnorm[{p_s^\ast(\alpha)}]{u}^{{p_s^\ast(\alpha)}} \le \frac{R^p}{p} - \frac{R^{{p_s^\ast(\alpha)}}}{{p_s^\ast(\alpha)}\, C} \le 0
\]
if $R$ is sufficiently large.
In view of \eqref{43} and Proposition \ref{Proposition 1}, it only remains to show that
$$
\sup I_\lambda(X) < \dfrac{sp-\alpha}{p(N-\alpha)}\, S^{\frac{N-\alpha}{sp-\alpha}},
$$
if $\eps$ is sufficiently small. Noting that
\[
X = \set{\rho\, \pi((1 - t)\, w + tv_0) : w \in E_\delta,\, 0 \le t \le 1,\, 0 \le \rho \le R},
\]
let $w \in E_\delta$ and $0 \le t \le 1$, and set $u = \pi((1 - t)\, w + tv_0)$. Then
\begin{align} \label{35}
\sup_{0 \le \rho \le R}\, I_\lambda(\rho u) &\le \sup_{\rho \ge 0}\, \left[\frac{\rho^p}{p} \left(1 - \lambda \pnorm{u}^p\right) - \frac{\rho^{{p_s^\ast(\alpha)}}}{{p_s^\ast(\alpha)}} \pnorm[{p_s^\ast(\alpha)}]{u}^{{p_s^\ast(\alpha)}}\right] = \frac{sp-\alpha}{p(N-\alpha)} \left[\frac{\left(1 - \lambda \pnorm{u}^p\right)^+}{\pnorm[{p_s^\ast(\alpha)}]{u}^p}\right]^{\frac{N-\alpha}{sp-\alpha}} \\
&= \frac{sp-\alpha}{p(N-\alpha)} \left[\frac{\left(\norm{(1 - t)\, w + tv_0}^p - \lambda \pnorm{(1 - t)\, w + tv_0}^p\right)^+}{\pnorm[{p_s^\ast(\alpha)}]{(1 - t)\, w + tv_0}^p}\right]^{\frac{N-\alpha}{sp-\alpha}}.   \notag
\end{align}

 From (3.17) in \cite[section 3.2]{MPSY},
\begin{equation}
\label{po}
\norm{(1 - t)\, w + tv_0}^p \le \frac{\lambda}{\lambda'}\, (1 - t)^p + t^p + C\, \eps^{N-(N-sp)\, q/p}.
\end{equation} where $q\in \ ]N(p - 1)/(N - sp), p[$.

On the other hand, since the supports of $w$ and $v_0$ are disjoint,
\begin{align} \label{49}
\pnorm{(1 - t)\, w + tv_0}^p &= (1 - t)^p \pnorm{w}^p + t^p \pnorm{v_0}^p,\\
\pnorm[{p_s^\ast(\alpha)}]{(1 - t)\, w + tv_0}^{{p_s^\ast(\alpha)}} &= (1 - t)^{{p_s^\ast(\alpha)}} \pnorm[{p_s^\ast(\alpha)}]{w}^{{p_s^\ast(\alpha)}} + t^{{p_s^\ast(\alpha)}} \pnorm[{p_s^\ast(\alpha)}]{v_0}^{{p_s^\ast(\alpha)}}.
\notag
\end{align}
By \eqref{41}, $\pnorm{w}^p = 1/\Psi(w) > 1/\lambda'$. By \eqref{30}, $\pnorm[{p_s^\ast(\alpha)}]{w}$ is bounded away from zero, and \eqref{42} implies that so is $\pnorm[{p_s^\ast(\alpha)}]{v_0}$ if $\eps$ is sufficiently small, so the last expression in \eqref{49} is bounded away from zero. It follows from \eqref{po} and \eqref{49} that
\[
\frac{\norm{(1 - t)\, w + tv_0}^p - \lambda \pnorm{(1 - t)\, w + tv_0}^p}{\pnorm[{p_s^\ast(\alpha)}]{(1 - t)\, w + tv_0}^p} \le \frac{1 - \lambda \pnorm{v_0}^p}{\pnorm[{p_s^\ast(\alpha)}]{v_0}^p} + C\, \eps^{N-(N-sp)\, q/p}.
\]
Since $v_0 = u_{\eps,\delta}/\norm{u_{\eps,\delta}}$, the right-hand side is less than or equal to
\[
S_{\eps,\delta}(\lambda) + C\, \eps^{N-(N-sp)\, q/p} \le S - \left(\frac{\lambda}{C} - C\, \eps^{(N-sp^2)/(p-1)} - C\, \eps^{(N-sp)(1-q/p)}\right) \eps^{sp}
\]
by \eqref{40}. Since $N > sp^2$ and $q < p$, it follows from this that the last expression in \eqref{35} is strictly less than $\dfrac{sp-\alpha}{p(N-\alpha)}\, S^{\frac{N-\alpha}{sp-\alpha}}$ if $\eps$ is sufficiently small.

\subsection{Case 3: $[(N-\alpha)N+\alpha\,s\,p\,(1+p)]/(N + s) > sp^2$, and $\lambda = \lambda_k$}

Let $\lambda = \lambda_k < \lambda_{k+1}$, let $\delta > 0$ be so small that the conclusions of Proposition \ref{Proposition 3} hold with $\lambda_k + C \delta^{N-sp} < \lambda_{k+1}$, in particular, $\Psi(w) < \lambda_{k+1}$ for all $w \in E_\delta$, and take $A_0 = E_\delta$ and $B_0 = \Psi_{\lambda_{k+1}}$ as in the last subsection. Then let $0 < \eps \le \delta/2$, let $R > r > 0$, let $v_0 = \pi(u_{\eps,\delta}) \in \M \setminus E_\delta$, and let $A$, $B$ and $X$ be as in Theorem \ref{Theorem 2.1}. As before, $\inf I_\lambda(B) > 0$ if $r$ is sufficiently small and
\[
I_\lambda(R\, \pi((1 - t)\, w + tv_0)) \le 0 \quad \forall w \in E_\delta,\, 0 \le t \le 1
\]
if $R$ is sufficiently large. On the other hand,
\[
I_\lambda(tw) \le \frac{t^p}{p} \left(1 - \frac{\lambda_k}{\Psi(w)}\right) \le C R^p \delta^{N-sp} \quad \forall w \in E_\delta,\, 0 \le t \le R
\]
by \eqref{31}, where $C$ denotes a generic positive constant independent of $\eps$ and $\delta$. It follows that
\[
\sup I_\lambda(A) \le C R^p \delta^{N-sp}< \inf I_\lambda(B)
\]
if $\delta$ is sufficiently small. As in the last proof, it only remains to show that (see \eqref{35})
\begin{equation} \label{48}
\sup_{(w,t) \in E_\delta \times [0,1]}\, \frac{\norm{(1 - t)\, w + tv_0}^p - \lambda_k \pnorm{(1 - t)\, w + tv_0}^p}{\pnorm[{p_s^\ast(\alpha)}]{(1 - t)\, w + tv_0}^p} < S
\end{equation}
if $\eps$ and $\delta$ are suitably small.\

Now let $\delta = \eps^\mu$ with $\mu \in (0,1)$, 
 from (3.22) in\cite[Section 3.3]{MPSY}
\begin{equation} \label{46}
\norm{(1 - t)\, w + tv_0}^p \le (1 - t)^p + t^p + \widetilde{J}_1 + \widetilde{J}_{p-1},
\end{equation}
where
\[
\widetilde{J}_q  \le C\, (1 - t)^{p-q}\, \eps^{(N-sp)[p\, (p-q-1)\, \mu + q]/p\, (p-1)}.
\]
Young's inequality then gives
\begin{equation} \label{52}
\widetilde{J}_q \le \frac{\kappa}{3}\, (1 - t)^{{p_s^\ast(\alpha)}} + C\, \eps^{sp + \beta_q(\mu)} \kappa^{- \gamma_q}
\end{equation}
for any $\kappa > 0$, where
\begin{equation*}
\beta_q(\mu) = \frac{[N(N-\alpha)-(N+s)sp^2+\alpha\,s\,p\,(p+1)](p - 1)(p - q) -( N-\alpha)p\, (N - sp)(p - q - 1)(\mu_0 - \mu)}{(p - 1)[(N - sp)\, q + p(sp-\alpha)]},
\end{equation*}
and
\begin{equation*}
\mu_0 = \frac{N - sp^2}{N - sp}, \qquad \gamma_q = \frac{(N - sp)(p - q)}{(N-\alpha)p - (N - sp)(p - q)}.
\end{equation*}
Then
\begin{equation} \label{47}
\norm{(1 - t)\, w + tv_0}^p \le (1 - t)^p + t^p + \frac{2 \kappa}{3}\, (1 - t)^{{p_s^\ast(\alpha)}} + C\, \eps^{sp} \left(\eps^{\beta_1(\mu)} \kappa^{- \gamma_1} + \eps^{\beta_{p-1}(\mu)} \kappa^{- \gamma_{p-1}}\right)
\end{equation}
by \eqref{46} and \eqref{52}.
Using $[(N-\alpha)N+\alpha sp(1+p)]/(N + s) > sp^2$, we fix $\mu < \mu_0$ so close to $\mu_0$ that $\beta_q(\mu) > 0$ for $q = 0, 1, p - 1, p$. By \eqref{31} and Young's inequality,
\begin{equation} \label{53}
\lambda_k\, (1 - t)^p \pnorm{w}^p \ge (1 - t)^p \left(1 - C\, \eps^{(N-sp)\, \mu}\right) \ge (1 - t)^p - \frac{\kappa}{3}\, (1 - t)^{{p_s^\ast(\alpha)}} - C\, \eps^{sp + \beta_0(\mu)} \kappa^{- \gamma_0}.
\end{equation}
By \eqref{47}, \eqref{49}, and \eqref{53}, the quotient $Q(w,t)$ in \eqref{48} satisfies
\begin{equation} \label{54}
Q(w,t) \le \frac{\big(1 - \lambda_k \pnorm{v_0}^p\big)\, t^p + \kappa\, (1 - t)^{{p_s^\ast(\alpha)}} + C\, \eps^{sp + \beta(\mu)} \kappa^{- \gamma}}{\left[(1 - t)^{{p_s^\ast(\alpha)}} \pnorm[{p_s^\ast(\alpha)}]{w}^{{p_s^\ast(\alpha)}} + t^{{p_s^\ast(\alpha)}} \pnorm[{p_s^\ast(\alpha)}]{v_0}^{{p_s^\ast(\alpha)}}\right]^{p/{p_s^\ast(\alpha)}}},
\end{equation}
where
\[
\beta(\mu) = \min \set{\beta_0(\mu),\beta_1(\mu),\beta_{p-1}(\mu)} > 0, \qquad \gamma = \max \set{\gamma_0,\gamma_1,\gamma_{p-1}} = \frac{N-sp}{sp-\alpha} .
\]
As before, the denominator is bounded away from zero if $\eps$ is sufficiently small, so it follows that
\[
\sup_{(w,t) \in E_{\eps^\mu} \times [0,t_0)}\, Q(w,t) \leq C(t_0^p+\kappa+\eps^{sp+\beta(\mu)}\kappa^{-\gamma})< S
\]
for some $t_0 > 0$ if $\kappa$ and $\eps$ are sufficiently small. For $t \ge t_0$, rewriting the right-hand side of \eqref{54} as
\[
\frac{\dfrac{1 - \lambda_k \pnorm{v_0}^p}{\pnorm[{p_s^\ast(\alpha)}]{v_0}^p} + \dfrac{\kappa\, (1 - t)^{{p_s^\ast(\alpha)}} + C\, \eps^{sp + \beta(\mu)} \kappa^{- \gamma}}{t^p \pnorm[{p_s^\ast(\alpha)}]{v_0}^p}}{\left[\dfrac{\pnorm[{p_s^\ast(\alpha)}]{w}^{{p_s^\ast(\alpha)}}}{t^{{p_s^\ast(\alpha)}} \pnorm[{p_s^\ast(\alpha)}]{v_0}^{{p_s^\ast(\alpha)}}}\, (1 - t)^{{p_s^\ast(\alpha)}} + 1\right]^{p/{p_s^\ast(\alpha)}}}
\]
gives $Q(w,t) \le g((1 - t)^{{p_s^\ast(\alpha)}})$, where
\[
g(\tau) = \frac{S_{\eps,\eps^\mu}(\lambda_k) + C \left(\kappa \tau + \eps^{sp + \beta(\mu)} \kappa^{- \gamma}\right)}{(1 + C^{-1}\, \tau)^{p/{p_s^\ast(\alpha)}}}, \qquad C=C(N, p, s, t_0).
\]
Since $0 \le (1 - t)^{{p_s^\ast(\alpha)}} < 1$, then
\[
Q(w,t) \le S_{\eps,\eps^\mu}(\lambda_k) + C \big(\kappa + \eps^{sp + \beta(\mu)} \kappa^{- \gamma}\big).
\]
If $S_{\eps_j,\eps_j^\mu}(\lambda_k) < S/2$ for some sequence $\eps_j \to 0$, then the right-hand side is less than $S$ for sufficiently small $\kappa$ and $\eps = \eps_j$ with sufficiently large $j$, so we may assume that $S_{\eps,\eps^\mu}(\lambda_k) \ge S/2$ for all sufficiently small $\eps$. Then it is easily seen that if $\kappa \le (p/{p_s^\ast(\alpha)})\, S/2\, C(C + 1)$, then $g'(\tau) \le 0$ for all $\tau \in [0,1]$ and hence the maximum of $g((1 - t)^{{p_s^\ast(\alpha)}})$ on $[t_0,1]$ occurs at $t = 1$. So, we reach
\[
Q(w,t) \le S_{\eps,\eps^\mu}(\lambda_k) + C\, \eps^{sp + \beta(\mu)} \kappa^{- \gamma} \le S - \left(\frac{\lambda_k}{C} - C\, \eps^{\beta_p(\mu)} - C\, \eps^{\beta(\mu)} \kappa^{- \gamma}\right) \eps^{sp}
\]
by \eqref{40}, and the desired conclusion follows for sufficiently small $\kappa$ and $\eps$.

\subsection{Case 4: $[N^2(N-\alpha) + s^3 p^3 +\alpha s p (N-sp)]/N\, (N + s -\alpha) > sp^2,\,
\bdry{\Omega} \in C^{1,1}$, and $\lambda = \lambda_k$}

\noindent
From the arguments in\cite[Section 3.4]{MPSY}, \eqref{31} can now be strengthened to
\begin{equation} \label{64}
\sup_{w \in E_\delta}\, \Psi(w) \le \lambda_k + C \delta^N.
\end{equation}
Proceeding as in the last subsection, we have to verify \eqref{48} for suitably small $\eps$ and $\delta$.
Since the argument is similar, we only point out where it differs. \\

From the arguments in \cite[Section 3.4]{MPSY}

\[
\widetilde{J}_q \le C\,(1-t)^{p-q} \eps^{\{p\, [(p-q-1)\, N + sq]\, \mu + (N-sp)\, q\}/p\, (p-1)}.
\]
Then \eqref{52} holds with
\begin{align*}
\beta_q(\mu)&= \frac{[N^2\,(N-\alpha) + s^3 p^3 - N sp^2\, (N + s - \alpha)+\alpha\,s\,p\,(N-sp)](p - 1)(p - q)}{(N - sp)[(N - sp)\, q +p( sp-\alpha)](p - 1)}\\
   &- \frac{(N-\alpha)p\, (N - sp)[N\, (p - q - 1) + sq](\mu_0 - \mu)}{(N - sp)[(N - sp)\, q + p(sp-\alpha)](p - 1)},
\end{align*}
and so does \eqref{53} by \eqref{64}. Using
$$
[N^2(N-\alpha) + s^3 p^3 +\alpha s p (N-sp)]/N\, (N + s -\alpha) > sp^2,
$$
we fix $\mu < \mu_0$ so close to $\mu_0$ that $\beta_q(\mu) > 0$ for $q = 0, 1, p - 1, p$ and proceed as before.

\def\cdprime{$''$}

\section{Proof of Theorem \ref{Theorem 2}}
\noindent
 By Proposition \ref{Proposition 1}, $I_\lambda$ satisfies the \PS{c} condition for all $c < \dfrac{sp-\alpha}{p(N-\alpha)}\, S^{(N-\alpha)/(sp-\alpha)}$, so we apply Theorem \ref{Theorem 2.4} with $b = \dfrac{sp-\alpha}{p(N-\alpha)}\, S^{(N-\alpha)/(sp-\alpha)}$. By Lemma \ref{Proposition 2}, $\Psi^{\lambda_{k+m}}$ has a compact symmetric subset $A_0$ with
	\[
	i(A_0) = k + m.
	\]
	We take $B_0 = \Psi_{\lambda_{k+1}}$, so that
	\[
	i(S_1 \setminus B_0) = k
	\]
	by \eqref{5}. Let $R > r > 0$ and let $A$, $B$ and $X$ be as in Theorem \ref{Theorem 2.4}. For $u \in B_0$,
	\[
	I_\lambda(ru) \ge \frac{r^p}{p} \left(1 - \frac{\lambda}{\lambda_{k+1}}\right) - \frac{r^{p_s^\ast(\alpha)}}{p_s^\ast(\alpha)\, S^{p_s^\ast(\alpha)/p}}
	\]
	by \eqref{3}. Since $\lambda < \lambda_{k+1}$ and $p_s^\ast(\alpha) > p$, it follows that $\inf I_\lambda(B) > 0$ if $r$ is sufficiently small. For $u \in A_0 \subset \Psi^{\lambda_{k+1}}$,
	\[
	I_\lambda(Ru) \le \frac{R^p}{p} \left(1 - \frac{\lambda}{\lambda_{k+1}}\right) - \frac{R^{p_s^\ast(\alpha)}}{p_s^\ast(\alpha) V_\alpha(\Omega)^{(s p-\alpha))/(N-sp)} \lambda_{k+1}^{p_s^\ast(\alpha)/p}}
	\]
	by \eqref{V} and the H\"{o}lder inequality, so there exists $R > r$ such that $I_\lambda \le 0$ on $A$. For $u \in X$,
	\begin{align*}
	I_\lambda(u) & \le \frac{\lambda_{k+1} - \lambda}{p} \int_\Omega |u|^p\, dx - \frac{1}{p_s^\ast(\alpha) V_{\alpha}(\Omega)^{(s p-\alpha)/(N-sp)}} \left(\int_\Omega |u|^p\, dx\right)^{p_s^\ast(\alpha)/p}\\[10pt]
	& \le \sup_{\rho \ge 0}\, \left[\frac{(\lambda_{k+1} - \lambda)\, \rho}{p} - \frac{\rho^{p_s^\ast(\alpha)/p}}{p_s^\ast(\alpha) V_{\alpha}(\Omega)^{(s p-\alpha)/(N-sp)}}\right]\\[10pt]
	& = \frac{sp-\alpha}{p(N-\alpha)}\, V_{\alpha}(\Omega) (\lambda_{k+1} - \lambda)^{(N-\alpha)/(sp-\alpha)}.
	\end{align*}
	So
	\[
	\sup I_\lambda(X) \le \frac{sp-\alpha}{p(N-\alpha)}\, V_{\alpha}(\Omega) (\lambda_{k+1} - \lambda)^{(N-\alpha)/(sp-\alpha)} < \dfrac{sp-\alpha}{p(N-\alpha)}\, S^{(N-\alpha)/(sp-\alpha)}
	\]
	by \eqref{555}. Theorem \ref{Theorem 2.4} now gives $m$ distinct pairs of (nontrivial) critical points $\pm\, u^\lambda_j,\, j = 1,\dots,m$ of $I_\lambda$ such that
	\begin{equation*}
	0 < I_\lambda(u^\lambda_j) \le \frac{sp-\alpha}{p(N-\alpha)}\, V_{\alpha}(\Omega) (\lambda_{k+1} - \lambda)^{(N-\alpha)/(sp-\alpha)} \to 0 \text{ as } \lambda \nearrow \lambda_{k+1}.
	\end{equation*}
	Then
	\[
	|u^\lambda_j|_{p_s^\ast(\alpha)}^{p_s^\ast(\alpha)} = \frac{(N-\alpha)p}{sp-\alpha} \left[I_\lambda(u^\lambda_j) - \frac{1}{p}\, I_\lambda'(u^\lambda_j)\, u^\lambda_j\right] = \frac{(N-\alpha)p}{sp-\alpha}\, I_\lambda(u^\lambda_j) \to 0
	\]
	and hence $u^\lambda_j \to 0$ in $L^p(\Omega)$ also by \eqref{V}, so
	\[
	\|u^\lambda_j\|^p = p\, I_\lambda(u^\lambda_j) + \lambda\, |u^\lambda_j|_p^p + \frac{p}{p_s^\ast(\alpha)}\, |u^\lambda_j|_{p_s^\ast(\alpha)}^{p_s^\ast(\alpha)} \to 0.
	\]

\bigskip
\bigskip


\begin{thebibliography}{10}

\bibitem{MR829403}
A.~Ambrosetti, M.~Struwe.
\newblock A note on the problem {$-\Delta u=\lambda u+u\vert u\vert \sp {2\sp
  \ast-2}$}.
\newblock {\em Manuscripta Math.}, 54(4):373--379, 1986.

\bibitem{AMRT}
F. Andreu, J. M. Maz\'on, J. D. Rossi, J. Toledo.
\newblock
A nonlocal $p-$Laplacian evolution equation with nonhomogeneous Dirichlet boundary conditions.
\newblock
{\em SIAM J. Math. Anal.}, 40:1815-185, 2009.

\bibitem{MR1741848}
G.\ Arioli, F.\ Gazzola.
\newblock Some results on {$p$}-{L}aplace equations with a critical growth
  term.
\newblock {\em Differential Integral Equations}, 11(2):311--326, 1998.

\bibitem{MR713209}
P.~Bartolo,  V.~Benci,  D.~Fortunato,
\newblock Abstract critical point theorems and applications to some nonlinear
  problems with strong resonance at infinity,
\newblock{\em Nonlinear Anal.} { 7}, 981--1012, 1983.

\bibitem{MR84c:58014}
V. Benci,
\newblock { On critical point theory for indefinite functionals in the presence
  of symmetries,}
\newblock {\em Trans. Amer. Math. Soc.} { 274}
533-572, 1982.


\bibitem{BraLin}
L.~Brasco, E.~Lindgren.
\newblock Higher Sobolev regularity for the fractional $p$-Laplace equation in the superquadratic case.
\newblock {\em Advances in Mathematics,}
{304(2)}, 300-354, 2017.

\bibitem{BrPa}
L.~Brasco, E.~Parini.
\newblock The second eigenvalue of the fractional $p$-{L}aplacian.
\newblock {\em Adv. Calc. Var.}, { 9(4)}, 2016. DOI: https://doi.org/10.1515/acv-2015-0007

\bibitem{BrPaSq}
L.~Brasco, E.~Parini, M.~Squassina.
\newblock Stability of variational eigenvalues for the fractional
  $p$-{L}aplacian.
\newblock {\em Discrete Contin. Dyn. Syst. Series A},  { 36}, 1813-1845, 2016.

\bibitem{MR699419}
	H.~Br{\'e}zis and E.~Lieb,
	A relation between pointwise convergence of functions and convergence of functionals.
	Proc. Amer. Math. Soc. { 88} 486--490, 1983.

\bibitem{MR709644}
H.\ Br{\'e}zis, L.\ Nirenberg.
\newblock Positive solutions of nonlinear elliptic equations involving critical
  {S}obolev exponents.
\newblock {\em Comm. Pure Appl. Math.}, {36(4) }, 437--477, 1983.



\bibitem{Ca}
L.\ Caffarelli.
\newblock Non-local diffusions, drifts and games.
\newblock In {\em Nonlinear Partial Differential Equations}, volume~7 of {\em
  Abel Symposia}, pages 37--52, 2012.

\bibitem{MR831041}
A.~Capozzi, D.~Fortunato, G.~Palmieri.
\newblock An existence result for nonlinear elliptic problems involving
  critical {S}obolev exponent.
\newblock {\em Ann. Inst. H. Poincar\'e Anal. Non Lin\'eaire}, 2(6): 463--470,
  1985.

\bibitem{MR779872}
G.\ Cerami, D.\ Fortunato, M.\ Struwe.
\newblock Bifurcation and multiplicity results for nonlinear elliptic problems
  involving critical {S}obolev exponents.
\newblock {\em Ann. Inst. H. Poincar\'e Anal. Non Lin\'eaire}, 1(5): 341--350,
  1984.

\bibitem{MR1124117}
M.\ Comte.
\newblock Solutions of elliptic equations with critical {S}obolev exponent in
  dimension three.
\newblock {\em Nonlinear Anal.}, 17(5): 445--455, 1991.

\bibitem{MR1306583}
D.~G. Costa, E.~A. Silva.
\newblock A note on problems involving critical {S}obolev exponents.
\newblock {\em Differential Integral Equations}, 8(3):673--679, 1995.

\bibitem{MR2371112}
M.\ Degiovanni, S.\ Lancelotti.
\newblock Linking over cones and nontrivial solutions for {$p$}-{L}aplace
  equations with {$p$}-superlinear nonlinearity.
\newblock {\em Ann. Inst. H. Poincar\'e Anal. Non Lin\'eaire}, 24(6): 907--919,
  2007.

\bibitem{MR2514055}
M.\ Degiovanni, S.\ Lancelotti.
\newblock Linking solutions for {$p$}-{L}aplace equations with nonlinearity at
  critical growth.
\newblock {\em J. Funct. Anal.}, 256(11): 3643--3659, 2009.

\bibitem{DiKuPa}
A.~Di~Castro, T.~Kuusi, G.~Palatucci.
\newblock Local behavior of fractional $p$-minimizers.
\newblock {\em Ann. Inst. H. Poincar\'{e} Anal. Non Lin\'{e}aire}, 33(5),  1279-1299, 2015.

\bibitem{MR3237774}
A.\ Di~Castro, T.\ Kuusi, G.\ Palatucci.
\newblock Nonlocal {H}arnack inequalities.
\newblock {\em J. Funct. Anal.}, 267(6): 1807--1836, 2014.


\bibitem{MR1473856}
P.\ Dr{\'a}bek, Y.\ Xi Huang.
\newblock Multiplicity of positive solutions for some quasilinear elliptic
  equation in {$\R^N$} with critical {S}obolev exponent.
\newblock {\em J. Differential Equations}, 140(1): 106--132, 1997.

\bibitem{MR956567}
H.\ Egnell.
\newblock Existence and nonexistence results for {$m$}-{L}aplace equations
  involving critical {S}obolev exponents.
\newblock {\em Arch. Rational Mech. Anal.}, 104(1): 57--77, 1988.

\bibitem{MR57:17677}
E.~R. Fadell, P.~H. Rabinowitz.
\newblock Generalized cohomological index theories for {L}ie group actions with
  an application to bifurcation questions for {H}amiltonian systems.
\newblock {\em Invent. Math.}, 45(2): 139--174, 1978.

\bibitem{FrPa}
G.~Franzina, G.~Palatucci.
\newblock Fractional $p$-eigenvalues.\
\newblock {\em Riv. Mat. Univ. Parma}, 5(2): 373--386, 2014.

\bibitem{MR1083144}
J.~Garc{\'{\i}}a~Azorero, I.~Peral~Alonso.
\newblock Multiplicity of solutions for elliptic problems with critical
  exponent or with a nonsymmetric term.
\newblock {\em Trans. Amer. Math. Soc.}, 323(2): 877--895, 1991.

\bibitem{MR912211}
J.~P. Garc{\'{\i}}a~Azorero, I.~Peral~Alonso.
\newblock Existence and nonuniqueness for the {$p$}-{L}aplacian: nonlinear
  eigenvalues.
\newblock {\em Comm. Partial Differential Equations}, 12(12): 1389--1430, 1987.

\bibitem{MR1441856}
F.\ Gazzola, B.\ Ruf.
\newblock Lower-order perturbations of critical growth nonlinearities in
  semilinear elliptic equations.
\newblock {\em Adv. Differential Equations}, 2(4): 555--572, 1997.

\bibitem{MR1695021}
N.~Ghoussoub, C.~Yuan.
\newblock Multiple solutions for quasi-linear {PDE}s involving the critical
  {S}obolev and {H}ardy exponents.
\newblock {\em Trans. Amer. Math. Soc.}, 352(12): 5703--5743, 2000.

\bibitem{MR1491613}
J.~V. Gon{\c{c}}alves, C.~O. Alves.
\newblock Existence of positive solutions for {$m$}-{L}aplacian equations in
  {$\R^N$} involving critical {S}obolev exponents.
\newblock {\em Nonlinear Anal.}, 32(1): 53--70, 1998.

\bibitem{MR1009077}
M.\ Guedda, L.\ V{\'e}ron.
\newblock Quasilinear elliptic equations involving critical {S}obolev
  exponents.
\newblock {\em Nonlinear Anal.}, 13(8): 879--902, 1989.

\bibitem{IaLiPeSq}
A.~Iannizzotto, S.~Liu, K.~Perera, M.~Squassina.
\newblock Existence results for fractional $p$-{L}aplacian problems via {M}orse theory.
\newblock {\em Adv. Calc. Var.},  9(2): 101-125, 2016.

\bibitem{IaMoSq}
A.~Iannizzotto, S.~Mosconi, M.~Squassina.
\newblock Global {H}\"{o}lder regularity for the fractional $p$-{L}aplacian.
\newblock {\em Rev. Mat. Iberoam.} 32, 1355-1394, 2016.

\bibitem{MR3245079}
A.\ Iannizzotto, M.\ Squassina.
\newblock Weyl-type laws for fractional {$p$}-eigenvalue problems.
\newblock {\em Asymptot. Anal.}, 88(4): 233--245, 2014.

\bibitem{IN}
H. Ishii, G. Nakamura.
\newblock class of integral equations and approximation of $p-$Laplace equations.
\newblock {\em Calc. Var. Partial Differential Equations}, 37: 485--522, 2010.

\bibitem{MR3339179}
T.\ Kuusi, G.\ Mingione, Y.\ Sire.
\newblock Nonlocal {E}quations with {M}easure {D}ata.
\newblock {\em Comm. Math. Phys.}, 337(3): 1317--1368, 2015.


\bibitem{Li}
E.~Lindgren.
\newblock H\"{o}lder estimates for viscosity solutions of equations of
  fractional $p$-{L}aplace type.
\newblock {\em Nonlinear Differential Equations and Applications NoDEA}, 23(5): 1-18, 2016.

\bibitem{MR3148135}
E.\ Lindgren, P.\ Lindqvist.
\newblock Fractional eigenvalues.
\newblock {\em Calc. Var. Partial Differential Equations}, 49(1-2): 795--826, 2014.

\bibitem{MM} S.A. Marano, S.J. Mosconi, \newblock Asymptotics for opmizers of the fractional Hardy-Sobolev inequality, preprint, arXiv:1609.01869, 2016

\bibitem{MS1}  S. Mosconi, M. Squassina, \newblock Nonlocal problems at nearly critical growth, \newblock{\em Nonlinear
Anal.}, 136, 84¨C101, 2016.

\bibitem{MS2} S. Mosconi, M. Squassina, \newblock Recent progresses in the theory of nonlinear nonlocal problems, \newblock {\em Bruno Pini Mathematical Analysis Sem.}, 7, 147-164, 2016

\bibitem{MPSY} S. Mosconi, K. Perera, M. Squassina, Y. Yang,
\newblock The Brezis-Nirenberg problem for the fractional p-Laplacian, \newblock{\em Calc. Var.} (2016) 55:105 DOI 10.1007/s00526-016-1035-2

\bibitem{MR2944369}
	E.~Di~Nezza, G.~Palatucci, and E.~Valdinoci,
	Hitchhiker's guide to the fractional {S}obolev spaces.
\newblock{\em	Bull. Sci. Math.} {136}, 521--573, 2012.

\bibitem{MR1998432}
K.\ Perera.
\newblock Nontrivial critical groups in {$p$}-{L}aplacian problems via the
  {Y}ang index.
\newblock {\em Topol. Methods Nonlinear Anal.}, 21(2): 301--309, 2003.

\bibitem{MR2640827}
K.\ Perera, R.P. Agarwal, D.\ O'Regan.
\newblock {\em Morse theoretic aspects of {$p$}-{L}aplacian type operators},
  volume 161 of {\em Mathematical Surveys and Monographs}.
\newblock American Mathematical Society, Providence, RI, 2010.

\bibitem{PS}
K.\ Perera, A. Szulkin,
\newblock $p$-Laplacian problems where the nonlinearity crosses an eigenvalue,
\newblock {\em Discrete Contin. Dyn. Syst.} { 13,} 743--753, 2005.

\bibitem{PeSqYa2}
K.\ Perera, M.\ Squassina, Y.\ Yang.
\newblock Bifurcation and multiplicity results for critical fractional
  $p$-{L}aplacian problems.\
\newblock {\em Math. Nachr.},  289(2-3): 332-342, 2016.

\bibitem{MR2153141}
K.\ Perera, A.\ Szulkin.
\newblock {$p$}-{L}aplacian problems where the nonlinearity crosses an
  eigenvalue.
\newblock {\em Discrete Contin. Dyn. Syst.}, 13(3): 743--753, 2005.

\bibitem{PZ} K. Perera, W.M. Zou, $p$-Laplacian problems involving critical Hardy-Sobolev exponents, preprint, arXiv:1609.01804

\bibitem{MR0488128}
P.H.\ Rabinowitz.
\newblock Some critical point theorems and applications to semilinear elliptic
  partial differential equations.
\newblock {\em Ann. Scuola Norm. Sup. Pisa Cl. Sci. (4)}, 5(1): 215--223, 1978.

\bibitem{MR3089742}
R.\ Servadei.
\newblock The {Y}amabe equation in a non-local setting.
\newblock {\em Adv. Nonlinear Anal.}, 2(3): 235--270, 2013.

\bibitem{MR3237009}
R.\ Servadei.
\newblock A critical fractional {L}aplace equation in the resonant case.
\newblock {\em Topol. Methods Nonlinear Anal.}, 43(1): 251--267, 2014.

\bibitem{MR3060890}
R.\ Servadei, E.\ Valdinoci.
\newblock A {B}rezis-{N}irenberg result for non-local critical equations in low
  dimension.
\newblock {\em Commun. Pure Appl. Anal.}, 12(6): 2445--2464, 2013.

\bibitem{MR3271254}
R.\ Servadei, E.\ Valdinoci.
\newblock The {B}rezis-{N}irenberg result for the fractional {L}aplacian.
\newblock {\em Trans. Amer. Math. Soc.}, 367(1): 67--102, 2015.

\bibitem{MR1784441}
E.A.B.~Silva, S.H.M. Soares.
\newblock Quasilinear {D}irichlet problems in {$\R^n$} with critical
growth.
\newblock {\em Nonlinear Anal.}, 43(1): 1--20, 2001.

\bibitem{MR1961520}
E.A.B.\ Silva, M.S.\ Xavier.
\newblock Multiplicity of solutions for quasilinear elliptic problems involving
  critical {S}obolev exponents.
\newblock {\em Ann. Inst. H. Poincar\'e Anal. Non Lin\'eaire}, 20(2): 341--358,
  2003.

\bibitem{MR1154480}
Z.H. Wei, X.M. Wu.
\newblock A multiplicity result for quasilinear elliptic equations involving
  critical {S}obolev exponents.
\newblock {\em Nonlinear Anal.}, 18(6): 559--567, 1992.

\bibitem{XZZ} M. Q. Xiang, B. L. Zhang,  X.  Zhang, \newblock
A Nonhomogeneous Fractional p-Kirchhoff Type Problem Involving Critical Exponent in $\R^N$, \newblock {\em Adv. Nonlinear Stud.},  2016, DOI: 10.1515/ans-2016-6002

\bibitem{YaPe2}
Y.\ Yang, K.\ Perera.
\newblock ${N}$-{L}aplacian problems with critical {T}rudinger-{M}oser
  nonlinearities.
\newblock {\em Ann. Sc. Norm. Super. Pisa Cl. Sci. (5)}, XVI(4): 1123-1138, 2016.

\bibitem{MR987374}
D.\ Zhang.
\newblock On multiple solutions of {$\Delta u+\lambda u+\vert u\vert
  ^{4/(n-2)}u=0$}.
\newblock {\em Nonlinear Anal.}, 13(4): 353--372, 1989.






\end{thebibliography}
\end{document}